\documentclass[11pt]{amsart}

\usepackage{amsmath,amssymb,amsthm,mathtools}
\usepackage{epsfig}
\usepackage{graphics}
\usepackage{color}

\newtheorem{theorem}{Theorem}[section]
\newtheorem{proposition}[theorem]{Proposition}
\newtheorem{lemma}[theorem]{Lemma}
\newtheorem{corollary}[theorem]{Corollary}
\newtheorem{definition}[theorem]{Definition}
\newtheorem{remark}[theorem]{Remark}
\newtheorem{example}[theorem]{Example}

\newcommand{\MF}{\operatorname{MF}}

\newcommand{\Fac}{\operatorname{Fac}}
\newcommand{\VVert}{\operatorname{Vert}}
\newcommand{\Edge}{\operatorname{Edge}}
\newcommand{\bbC}{\mathbb C}
\newcommand{\bbQ}{\mathbb Q}
\newcommand{\bbR}{\mathbb R}
\newcommand{\bbZ}{\mathbb Z}

\newcommand{\card}[1]{\lvert #1\rvert}

\numberwithin{equation}{section}

\begin{document}
\title[Homogeneous Milnor fibers and Kato--Matsumoto bounds]{Homogeneous Milnor fibers and Kato--Matsumoto bounds via simplicial multiwedges}

\author[Masaharu Ishikawa]{Masaharu Ishikawa}
\address{Faculty of Economics, Keio University, 4-1-1, Hiyoshi, Kouhoku, Yokohama, Kanagawa 223-8521, Japan}
\email{ishikawa@keio.jp}

\author[Tat-Thang Nguyen]{Tat-Thang Nguyen}
\address{Institute of Mathematics, Vietnam Academy of Science and Technology, 
18 Hoang Quoc Viet Road, Nghia Do ward, 10072 Hanoi, Vietnam}
\email{ntthang@math.ac.vn}


\subjclass[2020]{Primary 32S55; Secondary 55P62, 55S30, 55U10}

\begin{abstract}
For every $n\geq3$ and $s\geq2$, we construct a homogeneous polynomial of degree $n(n+1)/2$ whose Milnor fiber is exactly $2s$-connected and whose rational cohomology contains a strictly defined nontrivial $n$-fold Massey product on classes of degree $2s+1$, implying that the Milnor fiber is non-formal, while attaining the Kato--Matsumoto connectivity bound.
Our construction is based on the simplicial multiwedges of the nerve complexes of simple polytopes introduced by Limonchenko, combined with Suciu's realization of 
weighted homogeneous Milnor fibers.
We thereby answer two problems posed by Suciu.
\end{abstract}

\maketitle

\section{Introduction}

This paper concerns the non-formality and connectivity of Milnor fibers associated with singularities having trivial geometric monodromy. 
The formality question for Milnor fibers of hypersurface singularities was raised by Papadima and Suciu~\cite{PS09}. 
The first non-formal Milnor fiber was found by Zuber~\cite{Zub10} using the Ceva arrangement.
This Milnor fiber is not simply connected. See also~\cite{Suciu2026-2} for further developments in this direction.
In~\cite{FdB},
Fern\'{a}ndez de Bobadilla constructed simply connected, non-formal Milnor fibers with trivial geometric monodromy, using nontrivial triple Massey products arising from the work of Denham and Suciu~\cite{DS07} on moment-angle complexes.
Recently, Suciu proved in~\cite{Suciu2026} that there exist weighted homogeneous polynomials with trivial geometric monodromy whose Milnor fibers are non-formal and admit nontrivial Massey products of arbitrarily high order. His construction combines Fern\'{a}ndez de Bobadilla's realization theorem of Milnor fibers with the systematic construction of moment-angle complexes 
due to Grbi\'{c} and Linton in~\cite{GL21}.
Moreover, he showed that the connectivity of the corresponding Milnor fibers attains the Kato--Matsumoto bound.

Another construction relevant to our purposes is due to Limonchenko, who constructed highly connected moment-angle complexes with nontrivial higher Massey products.
In~\cite[Definition~3.5]{Limonchenko2019}, he introduced a simple polytope $P(n$) having $n(n+1)/2-1$ facets, obtained from the $n$-dimensional cube by truncating suitable $n(n-1)/2-1$ codimension-two faces,
and studied the simplicial multiwedge $K_{P(n)}(J)$ (in other words, $J$-construction) of 
the nerve complex $K_{P(n)}$ of $P(n)$ to obtain a highly connected complex with a strictly defined nontrivial $n$-fold Massey product,
where
\begin{equation}\label{eq1-1}
   J
   =\bigl(
      \underbrace{s,\ldots,s}_{n},
      \underbrace{1,\ldots,1}_{n},
      j_{2n+1,n,s},\ldots,j_{m,n,s}
     \bigr)
\end{equation}
is an $m$-tuple of positive integers such that $j_{p,n,s}=1$ for $p>2n$ if $s=1$ and 
$|I|\ge s+1$ for every minimal non-face $I$ of $K_{P(n)}(J)$.

Applying the argument of Suciu in~\cite{Suciu2026} to the simplicial multiwedge $K_{P(n)}(J)$ of Limonchenko,
as in the following theorem,
we obtain a homogeneous polynomial whose Milnor fiber is highly connected and has a nontrivial Massey product.
 
\begin{theorem}\label{thm1}
For every $n\ge 3$ and $J$ in~\eqref{eq1-1} with $s\geq 1$,
there exists a homogeneous polynomial $f_{n,J}:\bbC^N\to \bbC$ of degree $n(n+1)/2$
with trivial geometric monodromy such that its Milnor fiber $F_{n,J}=f_{n,J}^{-1}(1)$ has the following properties:
\begin{enumerate}
\item $F_{n,J}$ is exactly $2s$-connected;
\item The rational cohomology of $F_{n,J}$ contains a strictly defined nontrivial $n$-fold Massey product on classes of degree $2s+1$, landing in degree $2ns+2$. In particular, $F_{n,J}$ is non-formal.
\end{enumerate}
Moreover, the Kato--Matsumoto bound is sharp if $s\geq 2$ and $j_{2n+1,n,s},\ldots,j_{m,n,s}$ are sufficiently large
\end{theorem}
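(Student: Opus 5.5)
The plan is to follow Suciu's route in~\cite{Suciu2026}. Fern\'andez de Bobadilla's realization theorem gives, for a finite simplicial complex $K$, a weighted homogeneous polynomial $f_K$ with trivial geometric monodromy whose Milnor fiber is homotopy equivalent to the moment-angle complex $\mathcal{Z}_K$ (or to a suspension-type modification of it). We apply this with $K=K_{P(n)}(J)$. The key points are:
\begin{enumerate}
\item Arrange that all weights can be taken equal, so that $f_{n,J}$ is genuinely homogeneous.
\item Check that the degree equals the number of vertices of $K_{P(n)}$ that are involved, namely $m+1=n(n+1)/2$. In the construction, $f$ is a sum over the minimal non-faces, or over the facets of the polytope, of monomials whose degree is governed by $m$.
\end{enumerate}
Concretely, I would write $f_{n,J}$ as a polynomial in the variables attached to the vertices of the multiwedge. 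Each vertex $v$ of $K_{P(n)}$ is replaced by $j_v$ copies, and the monomials are assembled from the facet structure of $P(n)$. I would then check directly that it is homogeneous of degree $n(n+1)/2$. The step $m=n(n+1)/2-1$ facets plus one extra factor, coming from the $\mathbb C^*$-direction in the realization, is where the degree comes from. Trivial geometric monodromy follows because the Milnor fiber fibration is equivariantly trivialized by the torus action, exactly as in~\cite{FdB}.

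For (1) and (2), the work reduces to the topology of $\mathcal{Z}_{K}$ for $K=K_{P(n)}(J)$, using Limonchenko's results~\cite{Limonchenko2019}. Since every minimal non-face has cardinality at least $s+1$, $\mathcal{Z}_K$ is $2s$-connected. By Hochster's formula, $\tilde H^{*}(\mathcal Z_K)$ is a sum of $\tilde H^{*}(K_I)$ over full subcomplexes $K_I$. The minimal non-face of size exactly $s+1$, coming from the $s$-fold wedged vertex together with a $1$-vertex, contributes a nonzero class in degree $2(s+1)-1=2s+1$. This shows that the connectivity is exactly $2s$.

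Limonchenko's theorem provides the strictly defined nontrivial $n$-fold Massey product $\langle \alpha_1,\dots,\alpha_n\rangle$ with $\deg\alpha_i=2s+1$. Its degree is $n(2s+1)-(n-2)=2ns+2$, which matches the statement. Since Massey products are homotopy invariants, they transfer to $F_{n,J}$, and non-formality follows. If the realization only gives $F\simeq\mathcal Z_K$ up to a product or wedge with spheres, I must check that the rational cohomology ring changes by a tensor product with formal pieces. Then the Massey product stays strictly defined and nontrivial, using naturality under the retraction.

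The sharpness of the Kato--Matsumoto bound is the main obstacle. The bound says that $F$ is $(N-1-\dim\Sigma)$-connected, so $\dim\Sigma$ must equal $N-2s-1$, where $N$ is the number of variables. I would first compute $N=\sum_v j_v$ (plus any auxiliary variables), and then compute the singular locus of $f_{n,J}$. It is a union of coordinate-type subspaces indexed by the faces of $K$, so $\dim\Sigma$ equals $N$ minus the minimal cardinality of a non-face of a certain type, appropriately weighted. The issue is that large values of $j_{p}$ for $p>2n$ inflate $N$ faster than they inflate the codimension of $\Sigma$. Taking $j_{2n+1},\dots,j_m\gg s$ forces the minimal codimension of $\Sigma$ to be realized by the size-$(s+1)$ non-face. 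This yields $\operatorname{codim}\Sigma=2s+2$ and hence equality $2s=N-1-\dim\Sigma$.

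The hypothesis $s\ge2$ should enter at this point: for $s=1$ the singular locus has an extra component from the $1$-vertices that lowers the codimension. I would verify this by listing the components of $\Sigma$ explicitly from the critical equations $x_i\,\partial f/\partial x_i=0$, in the style of~\cite{Suciu2026}.
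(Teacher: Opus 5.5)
\textbf{Gap: homogeneity and the degree.} Homogeneity is the actual new content of the theorem, and your proposal gives no argument for it. The Fern\'andez de Bobadilla--Suciu realization is in general only weighted homogeneous, so ``arrange that all weights can be taken equal'' is exactly what has to be proved.

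The missing idea is purity. Suciu's polynomial $\Phi_K=\sum_j y_j\prod_{i\notin F_j}x_i$ is homogeneous if and only if $K$ is pure. $K_{P(n)}$ is pure because $P(n)$ is simple. Purity survives the multiwedge because every facet of $K(J)$ has the form $\bigcup_{i\in F}B_i\cup\bigcup_{i\notin F}(B_i\setminus\{b_i\})$ for a facet $F$ of $K$: it contains the whole blocks over $F$ and omits exactly one vertex from each other block.

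The same description gives the degree. Each monomial is $y_{\widehat F}$ times the $x$-variables of the $m-\card{F}$ omitted vertices, so the degree is $m-n+1$, independent of $J$. Your degree computation does not work. $P(n)$ has $m=2n+\bigl(n(n-1)/2-1\bigr)=n(n+3)/2-1$ facets, not $n(n+1)/2-1$. Also, the degree is not (number of facets)${}+1$. The two errors cancel numerically, but your formula applied to the correct facet count would give $n(n+3)/2$.

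Parts (1) and (2) of your sketch are fine and agree with the paper. Suciu's specialization gives $\Phi_K^{-1}(1)\simeq\mathcal Z_K$ exactly, so your hedge about suspensions or wedges with spheres is unnecessary.

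\textbf{Kato--Matsumoto sharpness.} You have an off-by-one error: the theorem gives $(N-\dim\Sigma-2)$-connectivity. So sharpness means $\operatorname{codim}\Sigma=2s+2$, which you eventually claim, but this contradicts your intermediate $\dim\Sigma=N-2s-1$.

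More importantly, you never compute $\operatorname{codim}\Sigma$, and your heuristics point at the wrong mechanism. $N$ is irrelevant, since both the bound $\kappa-2$ and the connectivity $2\nu-2$ are independent of it. The role of $s\ge2$ is not an extra singular component coming from the $1$-vertices. It is simply that for $s=1$ the definition of $J$ forces $j_p=1$ for $p>2n$, so those entries cannot be enlarged.

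The paper's argument runs as follows:
\begin{enumerate}
\item Start from Suciu's formula $\operatorname{codim}\Sigma=\kappa(K)=\min_{Z\notin K}(\card{Z}+\rho_K(Z))$.
\item For a multiwedge, a minimizer can be taken to be a union of full blocks. This gives $\kappa(K(J))=\min_{\mathcal A}\bigl(w_J(\mathcal U(\mathcal A))+w_J(\mathcal I(\mathcal A))\bigr)$ over nonempty families $\mathcal A\subseteq\MF(K)$.
\item For the flag complex $K_{P(n)}$ this reduces to $\min\{2(s+1),\tau_J(H_n)\}$, where $\tau_J(H_n)$ is the minimal weight of a triangle in the missing-edge graph.
\item The missing-edge graph on the first $2n$ vertices is bipartite, so every triangle passes through a truncation vertex $p>2n$.
\item Taking $j_p\ge 2s$ for those vertices makes every triangle weigh at least $2(s+1)$, hence $\kappa=2s+2$.
\end{enumerate}
Your sketch would need to supply this computation.
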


This theorem resolves the question posed by Suciu in~\cite[Problem~6.1]{Suciu2026},
asking  whether his weighted homogeneous realization theorem can be realized by homogeneous polynomials.  
His Remark~5.13 also identifies an obstruction to obtaining such a realization from the particular Grbi\'c--Linton family used there, since the associated simplicial complexes are not pure.  
We overcome this obstruction by using the simplicial multiwedges introduced by Limonchenko,
which lead to a different construction and yield the desired homogeneous realization.

The same realization can be applied to the generalized moment-angle complexes of $K_{P(n)}$,
which immediately gives an answer to~\cite[Problem~6.6]{Suciu2026}
with 
retaining
all the properties in Theorem~\ref{thm1},
see Corollary~\ref{cor:uniform-realization}.

This paper is organized as follows.
In Section~2, we briefly review known results on simplicial complexes and moment-angle complexes, simplicial multiwedges, and Suciu's results on simplicial complexes and Milnor fibers, including the Kato--Matsumoto bound. In Section~3, we discuss purity under simplicial multiwedges.
Section~4 is devoted to estimating the Kato--Matsumoto bound for simplicial multiwedges.
In Section~5, we introduce Limonchenko's simplicial multiwedge and prove Theorem~\ref{thm1}. 
The answers to Suciu's problems are given at the end of the section.

The second author would like to thank  the International Centre of Research and Postgraduate Training in Mathematics (ICRTM-VAST) for partially supporting the travel to Keio University, and Keio University for hospitality. 
The first author is supported by JSPS KAKENHI Grant Numbers JP23K03098 and JP23H00081.



\section{Preliminaries}

\subsection{Simplicial complexes and moment-angle complexes}

Let $K$ be a simplicial complex of dimension $n-1$ on the set $[m]=\{1,\ldots,m\}$.
In what follows we assume that there are no ghost vertices in $K$, that is, $\{i\}\in K$ for every $i=1,\ldots,m$.
Let $\VVert(K)$ and $\Fac(K)$ denote the sets of vertices and facets of $K$, respectively.
The complex $K$ is said to be {\it pure} if all its facets have the same cardinality.  Thus, if $\dim K=n-1$, purity means that every facet has exactly $n$ vertices.
Throughout the paper, for a set $S$, $|S|$ and $\# S$ denote the cardinality of $S$.

A {\it minimal non-face} $I$ of $K$ is a subset of $\VVert(K)$ that is not a simplex of $K$ but any proper subset of $I$ is a simplex of $K$.
Let $\MF(K)$ denote the set of minimal non-faces of $K$.
Note that  $\VVert(K)$ and $\MF(K)$ determine $K$.
A pair $\{u,v\}$ of vertices of $K$ is called a {\it missing-edge} of $K$ if $\{u,v\}$ is not an edge of $K$.

For a family of pairs of topological spaces  $\{(X_i,A_i)\}_{i=1}^m$ with $A_i\subset X_i$, 
the topological space defined by
\[
   \mathcal Z_K(\underline X,\underline A)
   =\bigcup_{\sigma\in K}\prod_{i=1}^m Y_i(\sigma),
   \qquad
   Y_i(\sigma)=
   \begin{cases}
      X_i,& i\in\sigma,\\
      A_i,& i\notin\sigma
   \end{cases}
\]
is called the {\it polyhedral product} of $\{(X_i,A_i)\}_{i=1}^m$ over $K$.
When all pairs $(X_i,A_i)$ are equal to a fixed pair $(X,A)$, we simply write $\mathcal Z_K(X,A)$ or $(X,A)^K$ for the polyhedral product.
The ordinary {\it moment-angle complex} is given by $\mathcal Z_K=(D^2,S^1)^K$.
For an $m$-tuple $(j_1,\ldots,j_m)$ of positive integers, 
$\mathcal Z_K\bigl((D^{2j_i},S^{2j_i-1})_{i=1}^m\bigr)$
is called the {\it generalized moment-angle complex} of $(D^{2j_i},S^{2j_i-1})_{i=1}^m$ over $K$.
We refer to~\cite{BBCG2010, BBCG2015, BP15, Limonchenko2019} for these constructions and their basic properties.

For a simplicial complex $K$, put
\[
   \nu(K)=\min\{\card{I} \mid I\in\MF(K)\}.
\]
The following proposition follows by combining the results in~\cite{ZZ93} and~\cite{BP15},
see~\cite[Theorem~3.5]{Suciu2026} together with the explanations given around it.
For the convenience of the reader, we include a brief proof.

\begin{proposition}[\cite{ZZ93,BP15}]\label{prop:connectivity}
If $K$ is not a simplex, then $\mathcal Z_K$ is exactly $(2\nu-2)$-connected, where $\nu=\nu(K)$.
More precisely, a minimal non-face $I$ of cardinality $\nu$ contributes a nonzero class in $H^{2\nu-1}(\mathcal Z_K;\bbZ)$.
\end{proposition}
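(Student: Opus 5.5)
The plan is to compute the cohomology of $\mathcal Z_K$ from Hochster's formula, which is the content of~\cite{BP15}:
\[
   H^{p}(\mathcal Z_K;\bbZ)\cong\bigoplus_{J\subseteq[m]}\widetilde H^{p-|J|-1}(K_J;\bbZ),
\]
where $K_J$ is the full subcomplex of $K$ on $J$, and $\widetilde H^{-1}(\emptyset)=\bbZ$ accounts for the class in degree $0$. Simple connectivity comes separately from~\cite{ZZ93}, or directly from the cell structure. Since $K$ has no ghost vertices, the $2$-skeleton of $\mathcal Z_K$ lies in the union of the products $D^2\times D^2\times (S^1)^{m-2}$ over edges and $D^2\times(S^1)^{m-1}$ over vertices. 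A direct check, or citing that $\mathcal Z_K$ is simply connected for any $K$ without ghost vertices, gives $\pi_1=0$. Hurewicz then reduces $(2\nu-2)$-connectivity to the vanishing of $\widetilde H_p(\mathcal Z_K;\bbZ)$ for $p\le 2\nu-2$.

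For the vanishing, take $J\ne\emptyset$ with $\widetilde H^{q}(K_J)\ne0$, where $q=p-|J|-1$. Two cases are immediate.
\begin{itemize}
\item If $|J|\le\nu-1$, then every subset of $J$ is a face, since any non-face contains a minimal non-face of size at least $\nu$. So $K_J$ is a simplex and is acyclic.
\item If $|J|\ge\nu$, then $K_J$ contains the full $(\nu-2)$-skeleton of the simplex on $J$, for the same reason. Hence $\widetilde H^q(K_J)=0$ for $q\le\nu-3$, and $\widetilde H^{\nu-2}(K_J)$ is torsion-free.
\end{itemize}
A nonzero summand therefore needs $q\ge\nu-2$, so $p=q+|J|+1\ge(\nu-2)+\nu+1=2\nu-1$. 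The same bound holds for homology, via the homology version of Hochster's formula. Thus $\widetilde H_{\le 2\nu-2}(\mathcal Z_K)=0$.

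For exactness, pick $I\in\MF(K)$ with $|I|=\nu$; this is possible because $K$ is not a simplex, so $\MF(K)\neq\emptyset$. Then $K_I=\partial\Delta^{\nu-1}\cong S^{\nu-2}$, and its summand contributes $\widetilde H^{\nu-2}(S^{\nu-2})=\bbZ$ in degree $p=(\nu-2)+\nu+1=2\nu-1$. Hence $H^{2\nu-1}(\mathcal Z_K;\bbZ)\ne0$, so $\mathcal Z_K$ is not $(2\nu-1)$-connected. Explicitly, the class can be described as the pullback of the fundamental class of $\mathcal Z_{K_I}=\partial(D^2)^{\nu}\cong S^{2\nu-1}$ under the retraction $\mathcal Z_K\to\mathcal Z_{K_I}$ induced by the full subcomplex. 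This gives the ``more precisely'' clause.

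The main subtlety is the passage from homology vanishing to connectivity. It needs simple connectivity, which I would take from~\cite{ZZ93} or a brief CW argument rather than reprove, together with the homology version of Hochster's formula. Everything else is bookkeeping with full subcomplexes.
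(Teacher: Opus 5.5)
Your proposal is correct. The exactness half is the paper's argument: you pick $I\in\MF(K)$ with $|I|=\nu$, note $K_I=\partial\Delta^{\nu-1}$, and read off a $\mathbb Z$ summand of $H^{2\nu-1}(\mathcal Z_K;\mathbb Z)$ from Hochster's decomposition. Your retraction $\mathcal Z_K\to\mathcal Z_{K_I}\cong S^{2\nu-1}$ gives a second reason the class is nonzero, because a retraction induces an injection in cohomology.

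You differ from the paper on the lower bound. The paper does not prove $(2\nu-2)$-connectivity; it simply invokes \cite[Theorem~3.5]{Suciu2026}. You derive it from the same Hochster decomposition. The key observation is that every full subcomplex $K_J$ with $|J|\ge\nu$ contains the whole $(\nu-2)$-skeleton of the simplex on $J$. You then add $\pi_1(\mathcal Z_K)=0$ and Hurewicz. This makes the proposition one bookkeeping argument with full subcomplexes and shows exactly where $2\nu-1$ comes from. The cost is two extra inputs, simple connectivity and an integral homology statement, whereas the paper's route is shorter but outsources the substantive half.

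Your torsion-freeness remark already removes the need for the homology version of Hochster's formula. The only nonzero summands of $H^{2\nu-1}(\mathcal Z_K;\mathbb Z)$ come from $|J|=\nu$ and $q=\nu-2$. There $K_J$ is either a simplex or $\partial\Delta^{\nu-1}$, so $H^{2\nu-1}(\mathcal Z_K;\mathbb Z)$ is free of rank equal to the number of minimal non-faces of size $\nu$. The universal coefficient theorem then turns your cohomological vanishing into $\widetilde H_p(\mathcal Z_K;\mathbb Z)=0$ for all $p\le 2\nu-2$.
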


\begin{proof}
By \cite[Theorem~3.5]{Suciu2026}, since every minimal non-face of
$K$ has cardinality at least $\nu$, the moment-angle complex
$\mathcal Z_K$ is $(2\nu-2)$-connected.

Let $I\in\MF(K)$ with $|I|=\nu$. Since $I$ is a minimal non-face,
every proper subset of $I$ is a face of $K$, whereas $I$ itself is not.
Hence the full subcomplex $K_I$ on $I$ is the boundary of a $(\nu-1)$-simplex, and therefore
$\widetilde H^{\nu-2}(K_I;\mathbb Z)\cong\mathbb Z$.
By Hochster's decomposition~\cite{Hoc77} (cf.~\cite{BP02, Bas02}), 
this group occurs as a direct summand of
\[
H^{(\nu-2)+\nu+1}(\mathcal Z_K;\mathbb Z)
=
H^{2\nu-1}(\mathcal Z_K;\mathbb Z).
\]
Thus $H^{2\nu-1}(\mathcal Z_K;\mathbb Z)\neq0$, so
$\mathcal Z_K$ is not $(2\nu-1)$-connected.
\end{proof}

\subsection{The simplicial multiwedge}

Let $K$ be a simplicial complex on the vertex set $[m]=\{1,\ldots,m\}$,
and $J=(j_1,\ldots,j_m)\in\bbZ_{>0}^m$ be an $m$-tuple of positive integers,
where $\bbZ_{>0}=\{k\in \bbZ\mid k>0\}$.
For each $i=1,\ldots, m$, 
let
\[
   B_i=\{i1,\ldots,ij_i\}
\]
be a set of $j_i$ new vertices replacing the vertex $i$. 
We call $B_i$ a {\it block} corresponding to the vertex $i$.
The {\it simplicial multiwedge} $K(J)$ of $K$ with $J$ is the simplicial complex on the vertex set
\[
   V(J)=B_1\sqcup\cdots\sqcup B_m
\]
whose minimal non-faces are given by
\[
   I(J)=\bigcup_{i\in I} B_i,
   \qquad I\in\MF(K).
\]
In other words, each vertex $i$ of $K$ is replaced by $j_i$ vertices, and a minimal non-face $I$ is replaced by the union of the corresponding sets $B_i$.
This construction was introduced by Bahri, Bendersky, Cohen and Gitler in~\cite{BBCG2015}.  It is also called the {\it $J$-construction} or {\it simplicial wedge construction}.

\begin{example}
Let $K$ be a simplicial complex on $[2]=\{1, 2\}$ whose simplicies are the two vertices $\{1\}$ and $\{2\}$.
The edge $\{1,2\}$ is a minimal non-face of $K$.
For $J=(3,1)$, $B_1=\{11,12,13\}$ and $B_2=\{21\}$.
Since $\MF(K)=\{\{1,2\}\}$, we have $I(J)=\{11, 12, 13, 21\}$.
Therefore, $K(J)$ is a simplicial complex on the four vertices $\{11,12,13,21\}$
with $\MF(K(J))=\{\{11,12,13,21\}\}$, which is the boundary of the $3$-simplex on the vertex set $\{11,12,13,21\}$.
\end{example}

The following homeomorphism between generalized moment-angle complexes over $K$ and
ordinary moment-angle complexes over $K(J)$ will be used to prove 
Corollary~\ref{thm:general-realization}, which in turn will be used to prove Corollary~\ref{cor:uniform-realization} at the end of the paper.

\begin{theorem}[{\cite[Theorem~7.5]{BBCG2015}}]\label{thm:exponentiation}
For every simplicial complex $K$ on the set $[m]$ and every $J=(j_1,\ldots,j_m)\in\bbZ_{>0}^m$, 
$\mathcal Z_K\bigl((D^{2j_i},S^{2j_i-1})_{i=1}^m\bigr)$ and $\mathcal Z_{K(J)}(D^2,S^1)$ are equivariantly  homeomorphic with respect to the respective
torus actions.
\end{theorem}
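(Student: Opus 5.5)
We would prove Theorem~\ref{thm:exponentiation} by writing down an explicit homeomorphism of pairs for a single block and then assembling these factorwise over the faces. The basic building block is the standard identification
\[
   (D^{2j},S^{2j-1})\;\cong\;\bigl((D^2)^{j},\,\partial\bigl((D^2)^j\bigr)\bigr)
   =\Bigl((D^2)^j,\ \bigcup_{k=1}^{j}(D^2)^{k-1}\times S^1\times(D^2)^{j-k}\Bigr).
\]
It is realized by radial rescaling of the polydisc $(D^2)^j\subset\bbC^j$ onto the round ball $D^{2j}\subset\bbC^j$, along rays from the origin. This rescaling commutes with the coordinatewise $T^j$-action, so the homeomorphism is $T^j$-equivariant. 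The right-hand pair is exactly the polyhedral product $(D^2,S^1)^{\partial\Delta_j}$ over the boundary of the simplex on the block $B_i$, whose unique minimal non-face is $B_i$.

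Next we substitute this identification into each coordinate of $\mathcal Z_K\bigl((D^{2j_i},S^{2j_i-1})_{i=1}^m\bigr)$, viewed inside $\prod_i (D^2)^{j_i}=(D^2)^{V(J)}$. For $\sigma\in K$, the product $\prod_i Y_i(\sigma)$ is carried to the set of points $z\in(D^2)^{V(J)}$ satisfying the following condition: for every $i\notin\sigma$, some coordinate in $B_i$ lies in $S^1$. Taking the union over $\sigma\in K$, a point $z$ lies in the image if and only if the set $\tau(z)=\{v\in V(J): |z_v|<1\}$ has the property that $\{i : B_i\subset\tau(z)\}\in K$.

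On the other side, $z\in\mathcal Z_{K(J)}$ if and only if $\tau(z)\in K(J)$. It therefore suffices to show the combinatorial identity
\[
   \tau\in K(J)\iff \{\,i: B_i\subset\tau\,\}\in K .
\]
By definition, $\tau\notin K(J)$ exactly when $\tau\supset I(J)=\bigcup_{i\in I}B_i$ for some $I\in\MF(K)$. This happens exactly when the set $\{i:B_i\subset\tau\}$ contains some minimal non-face of $K$, that is, when it is not a face of $K$. This proves the identity.

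The resulting map is a restriction of a product of homeomorphisms, so it is a homeomorphism onto its image. It is equivariant with respect to $T^{V(J)}=\prod_i T^{j_i}$, where the torus $T^{j_i}$ acts on the $i$-th factor $D^{2j_i}\subset\bbC^{j_i}$ coordinatewise.

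The main point requiring care is the first step, which is constructing the equivariant homeomorphism of pairs. Once that is in place, the rest is the bookkeeping identity for minimal non-faces of $K(J)$ shown above.
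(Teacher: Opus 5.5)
The paper gives no proof of this statement; it quotes it from Bahri--Bendersky--Cohen--Gitler and uses it only through Corollary~\ref{thm:general-realization}. Your argument is correct and self-contained, and follows essentially the standard lines.

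The radial rescaling $z\mapsto(\max_k|z_k|/\|z\|)\,z$, $0\mapsto 0$, is a homeomorphism. Continuity of it and its inverse at the origin follows from $\max_k|z_k|\le\|z\|\le\sqrt{j}\max_k|z_k|$. It multiplies $z$ by a positive scalar depending only on the moduli $|z_k|$, so it commutes with $T^j$. It carries $\partial\bigl((D^2)^j\bigr)$ onto $S^{2j-1}$.

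After that, your only input is the criterion $\tau\in K(J)\iff\{i: B_i\subseteq\tau\}\in K$. This is exactly Lemma~\ref{lem:membership}, which the paper proves in Section~\ref{sec:purity} by the same one-line argument you give. So within this paper your route would make the external citation unnecessary.

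Two side remarks:
\begin{itemize}
\item Everything after the rescaling works verbatim for an arbitrary pair $(X,A)$. It gives the equality $\mathcal Z_{K(J)}(X,A)=\mathcal Z_K\bigl((X^{B_i},(X,A)^{\partial\Delta_{B_i}})_{i=1}^m\bigr)$ of subsets of $X^{V(J)}$; only the rescaling step is specific to discs.
\item Your homeomorphism is equivariant for the full torus $T^{V(J)}=\prod_i T^{j_i}$, hence for every subtorus as well. This includes the diagonal $T^m$ acting by scalars on each $D^{2j_i}\subset\bbC^{j_i}$, so ``the respective torus actions'' are covered under either reading.
\end{itemize}
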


\subsection{Formality and Massey products}\label{sec2-3}

A topological space $X$ is said to be {\it formal} over $\bbQ$ if its rational homotopy type is determined by its cohomology ring.
Brieskorn proved in~\cite{Bri73} that the complement of a hyperplane arrangement is formal.
On the other hand, there are Milnor fibers of hypersurface singularities that are non-formal, as explained in the introduction. In such cases, the topology of the Milnor fiber provides richer information 
than its cohomology ring. 
It is therefore important to understand when non-formality occurs.

A Massey product of a topological space $X$ is a higher-order cohomology operation defined when certain lower-order cup products vanish. More precisely, given cohomology classes $\alpha_i\in H^{p_i}(X;\mathbb{Q})$, the $n$-fold Massey product $\langle\alpha_1,\ldots,\alpha_n\rangle$ is defined under suitable vanishing conditions and is a subset of
$H^{p_1+\cdots+p_n-(n-2)}(X;\mathbb{Q})$.
In general, a Massey product is not a single cohomology class but a set of classes due to an indeterminacy. A Massey product is called {\it nontrivial} if it does not contain $0$.
Non-trivial Massey products provide obstructions to formality.
A Massey product is called {\it strictly defined} if it consists of a single cohomology class, independently of the choice of a defining system. 
Strictly defined Massey products are not necessary for detecting non-formality, but they provide a particularly clear obstruction because their value is independent of the choice of a defining system.
See~\cite{Limonchenko2019} for details.

Denham and Suciu~\cite{DS07} studied the formality of moment-angle complexes and constructed simplicial complexes $K$ for which the associated moment-angle complex $\mathcal Z_K$ admits a nontrivial triple Massey product in rational cohomology. In particular, they exhibited infinitely many triangulations $K$ of $S^2$ for which $\mathcal Z_K$ is non-formal.
Fern\'{a}ndez de Bobadilla used this method to construct a non-formal Milnor fiber~\cite{FdB}.

The vanishing of all triple Massey products does not imply formality in general. 
Thus, higher Massey products may provide further obstructions to formality that cannot be detected by triple Massey products. It is therefore natural to investigate higher Massey products in the study of the formality of Milnor fibers. 
As explained in the next subsection, Suciu~\cite{Suciu2026} constructed weighted homogeneous Milnor fibers with nontrivial $n$-fold Massey products for every $n\geq 3$ and arbitrarily high connectivity.

\subsection{Suciu's polynomial associated with a simplicial complex}

We recall Suciu's construction of a polynomial associated with a simplicial complex~\cite{Suciu2026},
which is based on a construction of Fern\'andez de Bobadilla.

Let $K$ be a simplicial complex on a vertex set of cardinality $m$, with facets $F_1,\ldots,F_r$.  
To each facet $F_j$, associate the square-free monomial
\[
   g_{F_j}(x_1,\ldots, x_m)=\prod_{i\notin F_j}x_i.
\]
Consider the polynomial
\begin{equation}\label{eq:Suciu-poly}
   \Phi_K(x_1,\ldots, x_m, y_1,\ldots, y_r)=\sum_{j=1}^r y_j g_{F_j}(x_1,\ldots,x_m),
\end{equation}
where $y_j$ is a new variable corresponding to the facet $F_j$.
This construction is a specialization of a construction of Fern\'andez de Bobadilla~\cite{FdB}. 
The polynomial  $\Phi_K$ satisfies the following properties.

\begin{theorem}[{\cite[Corollary~3.2]{Suciu2026}}]\label{thm:Suciu-realization}
The polynomial $\Phi_K:\bbC^{m+r}\to\bbC$ is weighted homogeneous. 
Its Milnor fiber $\Phi_K^{-1}(1)$
is homotopy equivalent to $\mathcal Z_K$, and the geometric monodromy of $\Phi_K$ is trivial. 
Moreover, $\Phi_K$ is homogeneous if and only if $K$ is pure.
\end{theorem}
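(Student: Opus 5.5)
The plan is to treat the four assertions separately. The first three come from elementary bookkeeping of weights and scalings; the homotopy equivalence comes from projecting the fiber $\Phi_K^{-1}(1)$ to the $x$-coordinates.

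\emph{Weighted homogeneity and purity.} Every monomial of $\Phi_K$ is $y_jg_{F_j}$, which has ordinary degree $1+(m-\card{F_j})$. Give each $x_i$ the weight $1$ and each $y_j$ the weight $w_j=d-(m-\card{F_j})$ with $d=m+1$, so that all weights are positive. Then each monomial has weighted degree $d$, and $\Phi_K$ is weighted homogeneous. The monomials $y_jg_{F_j}$ are pairwise distinct because the $y_j$ are distinct variables, so no cancellation occurs. Hence $\Phi_K$ is homogeneous in the ordinary sense if and only if $1+m-\card{F_j}$ is independent of $j$, that is, if and only if all facets have the same cardinality, which is purity of $K$.

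\emph{Trivial geometric monodromy.} For $\lambda\in\bbC^*$ we have $\Phi_K(x,\lambda y)=\lambda\Phi_K(x,y)$, since $\Phi_K$ is linear in $y$. So the global Milnor fibration $\Phi_K\colon\bbC^{m+r}\setminus\Phi_K^{-1}(0)\to\bbC^*$ can be trivialized along the circle by the lift $(x,y)\mapsto(x,e^{2\pi i\theta}y)$, $\theta\in[0,1]$. This lift carries $\Phi_K^{-1}(1)$ to $\Phi_K^{-1}(e^{2\pi i\theta})$ and at $\theta=1$ it is the identity. The monodromy map obtained from this lift is therefore the identity, and since any two choices of geometric monodromy are isotopic, the geometric monodromy is trivial.

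\emph{Homotopy type.} Consider the projection $\pi\colon\Phi_K^{-1}(1)\to\bbC^m$, $(x,y)\mapsto x$. The fiber over $x$ is the affine set $\{y:\sum_j g_{F_j}(x)y_j=1\}$. It is nonempty exactly when some $g_{F_j}(x)\neq0$, and in that case it is an affine hyperplane in $\bbC^r$, hence contractible. Now $g_{F_j}(x)\ne0$ if and only if $\{i: x_i=0\}\subset F_j$. So the image of $\pi$ is
\[
U(K)=\bigcup_{\sigma\in K}\prod_{i=1}^m Y_i(\sigma),\qquad Y_i(\sigma)=\begin{cases}\bbC,&i\in\sigma,\\ \bbC^*,&i\notin\sigma,\end{cases}
\]
which is the polyhedral product $(\bbC,\bbC^*)^K$, i.e.\ the complement of the coordinate subspace arrangement of $K$.

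The main technical point is to check that $\pi$ is a locally trivial bundle with contractible fibers. Near a point $x_0$, pick $j$ with $g_{F_j}(x_0)\ne0$; then $g_{F_j}\neq0$ on a neighborhood. There one can solve for $y_j$, which identifies the fiber with $\bbC^{r-1}$ continuously in $x$. Alternatively, $\Phi_K^{-1}(1)\to U(K)$ is an affine subbundle of the trivial bundle $U(K)\times\bbC^r$ cut out by a nowhere-vanishing linear functional, and it admits a section via a partition of unity. In either form, $\pi$ is a homotopy equivalence. Finally, $(\bbC,\bbC^*)^K$ deformation retracts onto $(D^2,S^1)^K=\mathcal Z_K$ by the standard coordinatewise retraction of $(\bbC,\bbC^*)$ onto $(D^2,S^1)$, which respects the polyhedral product structure, as in~\cite{BP15}. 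Hence $\Phi_K^{-1}(1)\simeq\mathcal Z_K$.
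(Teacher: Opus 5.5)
Your proof is correct in substance and follows the same route as the source the paper relies on. The paper gives no proof of its own here; it quotes \cite[Corollary~3.2]{Suciu2026}, which specializes Fern\'andez de Bobadilla's construction \cite{FdB}. There, projecting to the $x$-coordinates makes $\Phi_K^{-1}(1)$ an affine $\bbC^{r-1}$-bundle over the complement of the common zero set of the $g_{F_j}$, i.e.\ over $(\bbC,\bbC^*)^K$, and this space is then identified with $\mathcal Z_K$ via \cite{BP15}. Your weight bookkeeping and the $y\mapsto\lambda y$ trivialization of the monodromy are the standard arguments.

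One justification needs correcting. The last step is not induced by a ``coordinatewise retraction of $(\bbC,\bbC^*)$ onto $(D^2,S^1)$'', because no such retraction of pairs exists. A map $r\colon\bbC\to D^2$ with $r(\bbC^*)\subseteq S^1$ must send $0$ into $S^1$ by continuity. Then $r|_{\bbC^*}$ factors through the contractible space $\bbC$, so it cannot be homotopic within $\bbC^*$ to the identity of $\bbC^*$.

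Only the first half of the retraction is coordinatewise: the radial push of $\bbC^m$ onto $(D^2)^m$ preserves zero patterns, so it retracts $U(K)$ onto $U(K)\cap(D^2)^m$. The remaining retraction onto $\mathcal Z_K$ genuinely mixes coordinates. Already for $K$ consisting of two disjoint vertices it is the radial retraction of $(D^2\times D^2)\setminus\{0\}$ onto $\partial(D^2\times D^2)\cong S^3$.

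So invoke the actual statement in \cite{BP15} that $\mathcal Z_K$ is a $T^m$-equivariant deformation retract of $U(K)$. That result is proved on the orbit space $U(K)/T^m\subset\bbR^m_{\ge0}$ by retracting onto the cubical complex $\mathcal Z_K/T^m$. Alternatively, use homotopy invariance of the union over the face poset of $K$, viewed as a homotopy colimit. With either replacement your argument is complete.
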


\begin{remark}
If $K$ has no ghost vertices, then $\Phi_K$ is irreducible in $\bbC[x_1,\ldots,x_m,y_1,\ldots,y_r]$.
Indeed, the greatest common divisor of the monomials $g_{F_j}$, $F_j\in\Fac(K)$, is $1$ in this case, 
and $\Phi_K$ is linear in the variables $y_{F_j}$. 
Hence, any factorization of $\Phi_K$ would require one of the factors to divide every $g_{F_j}$, and thus to be a unit.
\end{remark}

By constructing suitable moment-angle complexes admitting nontrivial higher Massey products, and applying the realization in Theorem~\ref{thm:Suciu-realization}, Suciu obtained the following result.
This result provides the starting point for our work.

\begin{theorem}[{\cite[Theorem~4.1]{Suciu2026}}]
For every $k\geq 1$ and $n\geq 3$, there exists a weighted homogeneous polynomial with trivial geometric monodromy such that its Milnor fiber is $(2k+2)$-connected and admits a nontrivial $n$-fold Massey product.
\end{theorem}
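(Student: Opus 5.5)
The plan is to combine a moment-angle complex that is already known to carry a nontrivial $n$-fold Massey product with the realization result of Theorem~\ref{thm:Suciu-realization}. Fix $k\ge1$ and $n\ge3$. I would set $s=k+1\ge 2$ and take the simplicial complex $K=K_{P(n)}(J)$ of Limonchenko, with $J$ as in~\eqref{eq1-1}. The tail entries $j_{2n+1,n,s},\dots,j_{m,n,s}$ are chosen so that every minimal non-face of $K$ has cardinality at least $s+1$. From~\cite{Limonchenko2019} I take as given that $H^*(\mathcal Z_K;\bbQ)$ contains a strictly defined nontrivial $n$-fold Massey product. Alternatively, to follow Suciu's original route, one may use the Grbi\'c--Linton complexes~\cite{GL21}; the formal structure of the argument below is the same.

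\emph{Step 1 (connectivity).} Since $\nu(K)\ge s+1=k+2$ and $K$ is not a simplex, Proposition~\ref{prop:connectivity} gives that $\mathcal Z_K$ is $(2\nu(K)-2)$-connected. As $2\nu(K)-2\ge 2k+2$, this is at least $(2k+2)$-connected.

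\emph{Step 2 (realization).} Apply Theorem~\ref{thm:Suciu-realization} to $K$. The polynomial $\Phi_K$ of~\eqref{eq:Suciu-poly} is weighted homogeneous and has trivial geometric monodromy. Its Milnor fiber $\Phi_K^{-1}(1)$ is homotopy equivalent to $\mathcal Z_K$.

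\emph{Step 3 (transfer of invariants).} Connectivity is a homotopy invariant, and rational Massey products are natural under maps and hence preserved under homotopy equivalence. Therefore $\Phi_K^{-1}(1)$ is $(2k+2)$-connected and admits a nontrivial $n$-fold Massey product.

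The main obstacle is the input to Step~1: exhibiting, for each $n$, a complex with a nontrivial $n$-fold Massey product while simultaneously keeping $\nu$ large. The multiwedge is what reconciles the two requirements. By the definition of $K(J)$, the minimal non-faces of $K(J)$ are exactly the sets $I(J)$ for $I\in\MF(K_{P(n)})$, so
\[
\card{I(J)}=\sum_{i\in I} j_i .
\]
The entries of $J$ attached to the cube facets are fixed at $s$ and $1$. Hence I would check, facet by facet of $P(n)$, that every minimal non-face $I$ of $K_{P(n)}$ either contains a block of size $s$ together with another vertex, or contains a vertex from the tail of $J$. In the second case, making the tail entries at least $s+1$ (as allowed in~\eqref{eq1-1}) gives $\card{I(J)}\ge s+1$.

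Finally, the Massey product must survive the wedge operation. I would argue this via Theorem~\ref{thm:exponentiation}: it identifies $\mathcal Z_{K(J)}$ with a generalized moment-angle complex over $K_{P(n)}$, on which Limonchenko's defining system is constructed directly.
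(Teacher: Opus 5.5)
Your argument is correct and is essentially the paper's proof of Theorem~\ref{thm1} specialized to $s=k+1$; the paper does not reprove this statement but cites Suciu, whose original proof uses the Grbi\'c--Linton complexes, and because $K_{P(n)}(J)$ is pure your route even gives a homogeneous polynomial whose Milnor fiber is exactly $(2k+2)$-connected. Your last paragraph is unnecessary: Limonchenko's Theorem~4.1 is applied directly to $\mathcal Z_{K_{P(n)}(J)}$, as you already assumed at the outset, so nothing needs to be transported through Theorem~\ref{thm:exponentiation}.
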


Applying Theorem~\ref{thm:Suciu-realization} with $K$ replaced by $K(J)$, 
together with Theorem~\ref{thm:exponentiation}, yields the following corollary.

\begin{corollary}\label{thm:general-realization}
Let $K$ be a simplicial complex on the set $[m]$ and let $J=(j_1,\ldots,j_m)\in\bbZ_{>0}^m$.
Let $\Phi_{K(J)}$ be the polynomial given by~\eqref{eq:Suciu-poly} with $K$ replaced by $K(J)$.
Then $\Phi_{K(J)}$ is weighted homogeneous, has trivial geometric monodromy, and its Milnor fiber
$\Phi_{K(J)}^{-1}(1)$ is homotopy equivalent to $\mathcal Z_K\bigl((D^{2j_i},S^{2j_i-1})_{i=1}^m\bigr)$.
\end{corollary}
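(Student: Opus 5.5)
This follows by composing the two results just recalled. Apply Theorem~\ref{thm:Suciu-realization} to the simplicial complex $K(J)$ in place of $K$; then apply Theorem~\ref{thm:exponentiation} to identify the resulting moment-angle complex with the generalized one.

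First, $K(J)$ is a genuine simplicial complex on the vertex set $V(J)=B_1\sqcup\cdots\sqcup B_m$. We should check that it has no ghost vertices, since Theorem~\ref{thm:Suciu-realization} is stated in that setting. A vertex $ik$ is a ghost vertex exactly when $\{ik\}$ is a non-face. Every singleton minimal non-face $\{ik\}$ of $K(J)$ would be of the form $I(J)=B_i$ with $I=\{i\}\in\MF(K)$ and $j_i=1$. Since $K$ has no ghost vertices, no minimal non-face of $K$ is a singleton. Each $I(J)$ has cardinality $\sum_{i\in I}j_i\geq |I|\ge 2$. Hence every singleton $\{ik\}$ is a face of $K(J)$, because any non-face contains a minimal non-face. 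We should also note that the prescribed sets $I(J)$ form an antichain, so they are precisely the minimal non-faces of a well-defined complex. This holds because $I\mapsto I(J)$ preserves and reflects inclusion, as the blocks are disjoint and nonempty.

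Second, Theorem~\ref{thm:Suciu-realization} applied to $K(J)$ gives the following: $\Phi_{K(J)}$ is weighted homogeneous, its geometric monodromy is trivial, and its Milnor fiber $\Phi_{K(J)}^{-1}(1)$ is homotopy equivalent to $\mathcal Z_{K(J)}=\mathcal Z_{K(J)}(D^2,S^1)$.

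Third, Theorem~\ref{thm:exponentiation} provides a homeomorphism $\mathcal Z_{K(J)}(D^2,S^1)\cong\mathcal Z_K\bigl((D^{2j_i},S^{2j_i-1})_{i=1}^m\bigr)$. Composing it with the homotopy equivalence from the second step gives the claimed homotopy type of the Milnor fiber.

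There is no real obstacle here. The only point needing care is the bookkeeping in the first step: checking that $K(J)$ meets the standing hypotheses (well defined, no ghost vertices) under which Theorem~\ref{thm:Suciu-realization} was stated.
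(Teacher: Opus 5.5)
Your proposal is correct and follows the paper's own argument exactly: apply Theorem~\ref{thm:Suciu-realization} to $K(J)$, then compose the resulting homotopy equivalence with the homeomorphism of Theorem~\ref{thm:exponentiation}. You also check that $K(J)$ is well defined and has no ghost vertices, using that $K$ has none so every $I(J)$ has at least two elements; the paper leaves this implicit, but your check is valid.
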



\subsection{The Kato--Matsumoto defect}\label{subsec:KM-background}

The dimension of the singular set $\operatorname{Sing}(\Phi_K^{-1}(0))$ of $\Phi_K^{-1}(0)$ is
determined by the combinatorial data of the simplicial complex $K$,
as shown in~\cite[Proposition~5.2]{Suciu2026}.
Put
\begin{equation}\label{eq:kappa}
   \kappa(K)=\min_{Z\notin K}\bigl(|Z|+\rho_K(Z)\bigr), 
\end{equation}
where $Z\subseteq \VVert(K)$ and
\begin{equation}\label{eq:rho}
   \rho_K(Z)
   =\#\{i\in Z \mid Z\setminus\{i\}\in K\}
   =\left|\bigcap_{\substack{I\in\MF(K)\\ I\subseteq Z}}I\,\right|.
\end{equation}

Recall that  $\Phi_K:\bbC^{m+r}\to\bbC$, where $m=|\VVert(K)|$ and $r=|\Fac(K)|$.
Setting $N=m+r$, we may write $\Phi_K:\bbC^N\to\bbC$.
The above $\kappa(K)$ is the codimension of the singular set of $\Phi_K^{-1}(0)$ in $\bbC^N$, as follows from~(1) below.

\begin{proposition}[{\cite[Proposition~5.2]{Suciu2026}}]\label{thm:Suciu-singular}
The following statements hold:
\begin{itemize}
\item[(1)] $\dim \operatorname{Sing}(\Phi_K^{-1}(0))=N-\kappa(K)$;
\item[(2)] $\kappa(K)\le 2\nu(K)$.
\end{itemize}
\end{proposition}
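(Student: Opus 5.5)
The plan is to compute the singular locus directly from the Jacobian of $\Phi_K$, stratifying $\bbC^N$ by which $x$-coordinates vanish. Statement~(2) then follows by evaluating the stratum dimension at a minimal non-face.

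\emph{Step 1: stratify.} For a point $(x,y)\in\bbC^{m+r}$, put $Z=\{i\in[m]\mid x_i=0\}$. Since $\partial\Phi_K/\partial y_j=g_{F_j}(x)$ and $g_{F_j}(x)=\prod_{i\notin F_j}x_i$, we have $g_{F_j}(x)=0$ if and only if $Z\not\subseteq F_j$. So all $y$-partials vanish exactly when $Z$ lies in no facet, that is, when $Z\notin K$. In that case every monomial $g_{F_j}$ vanishes, so $\Phi_K(x,y)=0$ automatically. Hence
\[
\operatorname{Sing}(\Phi_K^{-1}(0))=\bigsqcup_{Z\notin K}S_Z,
\]
where $S_Z$ is the set of points with zero set exactly $Z$ at which all $x$-partials vanish.

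\emph{Step 2: the $x$-partials on a stratum.} We have
\[
\partial\Phi_K/\partial x_i=\sum_{j:\,i\notin F_j} y_j\prod_{k\notin F_j,\,k\neq i}x_k .
\]
If $i\notin Z$, each term contains some $x_k$ with $k\in Z$ (because $Z\not\subseteq F_j$ and $i\notin Z$), so this partial vanishes identically. If $i\in Z$, the $j$-th term is generically nonzero exactly when $Z\setminus\{i\}\subseteq F_j$. Such a facet automatically satisfies $i\notin F_j$, since $Z\notin K$. Thus if $Z\setminus\{i\}\notin K$ there is no condition. If $Z\setminus\{i\}\in K$ we get one linear equation $\sum_{j\in T_i} c_j(x)\,y_j=0$, where $T_i=\{j\mid Z\setminus\{i\}\subseteq F_j\}$ is nonempty and each coefficient $c_j(x)$ is a nonzero monomial in the nonvanishing coordinates $x_k$, $k\notin Z$.

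The sets $T_i$ for distinct $i$ are disjoint: a facet containing both $Z\setminus\{i\}$ and $Z\setminus\{i'\}$ with $i\neq i'$ would contain $Z$. Hence these $\rho_K(Z)$ equations are linearly independent. It follows that $S_Z$ is a fibration over $\{x_Z=0,\ x_k\neq0 \text{ for } k\notin Z\}\cong(\bbC^*)^{m-|Z|}$ with fiber a linear subspace of $\bbC^r$ of codimension $\rho_K(Z)$. Therefore
\[
\dim S_Z=N-|Z|-\rho_K(Z).
\]
Taking the maximum over $Z\notin K$ gives $\dim\operatorname{Sing}(\Phi_K^{-1}(0))=N-\kappa(K)$, which is~(1).

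\emph{Step 3: the bound.} For~(2), take $Z=I\in\MF(K)$ with $|I|=\nu(K)$. Every $I\setminus\{i\}$ is a face, so $\rho_K(I)=|I|$, and hence $\kappa(K)\le 2\nu(K)$.

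The only delicate point is the independence count in Step~2, namely the disjointness of the $T_i$ together with the nonvanishing of the coefficients $c_j(x)$ on the stratum. Everything else is bookkeeping. The second expression for $\rho_K$ in~\eqref{eq:rho} is not needed for this argument.
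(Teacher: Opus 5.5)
Your proof is correct, and it differs from what the paper does. The paper gives no argument for (1); it only cites Suciu's Proposition~5.2. Your stratification by the zero set $Z$ of the $x$-coordinates therefore supplies a self-contained proof of something the paper takes as input. The key points there are the disjointness of the $T_i$ and the nonvanishing of the $c_j(x)$ on the stratum, which together give exactly $\rho_K(Z)$ independent linear conditions on $y$.

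For (2), the paper argues topologically. By (1) and weighted homogeneity, the Kato--Matsumoto theorem makes $\Phi_K^{-1}(1)\simeq\mathcal Z_K$ $(\kappa(K)-2)$-connected. Proposition~\ref{prop:connectivity}, via a Hochster class in $H^{2\nu-1}(\mathcal Z_K;\bbZ)$, shows it is not $(2\nu(K)-1)$-connected, which forces $\kappa(K)\le 2\nu(K)$. You instead evaluate $|Z|+\rho_K(Z)$ at a minimal non-face of cardinality $\nu(K)$.

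Your route is elementary. It does not depend on (1), on the Kato--Matsumoto theorem, on the realization $\Phi_K^{-1}(1)\simeq\mathcal Z_K$, or on Hochster's decomposition. It is also the same singleton computation the paper itself uses later for $\mathcal A=\{I\}$ in Theorem~\ref{thm:union-intersection}. What the paper's route buys is the interpretation behind Definition~\ref{def:KMdefect}: the inequality $\delta_{\mathrm{KM}}(K)\ge 0$ is precisely the statement that the Kato--Matsumoto lower bound cannot exceed the actual connectivity $2\nu(K)-2$.
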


Kato and Matsumoto proved in~\cite{KM75} that if the singular set of a germ $f:(\bbC^N,0)\to(\bbC,0)$ has dimension $s$, then its Milnor fiber is $(N-s-2)$-connected. 
Note that the Kato--Matsumoto theorem uses the dimension of the singular set at the origin,
whereas $\dim \operatorname{Sing}(\Phi_K^{-1}(0))$ above refers to the dimension of an algebraic variety in $\bbC^N$.
However, these two dimensions coincide since $\Phi_K$ is weighted homogeneous.
Therefore, setting $s=\dim \operatorname{Sing}(\Phi_K^{-1}(0))$, 
the Kato--Matsumoto theorem gives the lower bound $\kappa(K)-2$ for the connectivity of the global Milnor fiber $\Phi^{-1}_K(1)$, whereas Proposition~\ref{prop:connectivity} gives the exact connectivity $2\nu(K)-2$.
Proposition~\ref{thm:Suciu-singular}(2) above follows from this observation.

\begin{definition}\label{def:KMdefect}
We call 
\[
   \delta_{\mathrm{KM}}(K)=2\nu(K)-\kappa(K)\ge 0
\]
the {\it Kato--Matsumoto defect} of $K$.
\end{definition}

The defect $\delta_{\mathrm{KM}}(K)$ is equal to the difference of 
the connectivity of the Milnor fiber $\Phi_K^{-1}(1)$ 
and the Kato--Matsumoto bound $\kappa(K)-2$.
Proposition~\ref{thm:Suciu-singular}(2) implies that the Kato--Matsumoto bound is sharp if and only if $\delta_{\mathrm{KM}}(K)=0$.

\section{Purity of complexes under simplicial multiwedges}\label{sec:purity}

Let $K$ be a simplical complex on the set $[m]$ and $K(J)$ be its simplicial multiwedge.
We prove the following proposition in this section.

\begin{proposition}\label{thm:degree-invariance}
Suppose that $K$ is pure with facets of cardinality $d$.  For every $J\in\bbZ_{>0}^m$, the polynomial $\Phi_{K(J)}$ is homogeneous of degree
\[
   \deg\Phi_{K(J)}=m-d+1.
\]
In particular, this degree is independent of $J$.
\end{proposition}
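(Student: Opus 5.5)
Our plan is to describe the facets of $K(J)$ explicitly in terms of the facets of $K$, and then read off the degree of $\Phi_{K(J)}$ from~\eqref{eq:Suciu-poly}. Each monomial $y_j g_{F_j}$ in $\Phi_{K(J)}$ has degree $1+(|V(J)|-|F_j|)$, where $|V(J)|=j_1+\cdots+j_m$. Hence it suffices to show that every facet $G$ of $K(J)$ has cardinality
\[
|G|=\sum_{i=1}^m j_i-(m-d).
\]
This gives purity of $K(J)$, so homogeneity also follows from Theorem~\ref{thm:Suciu-realization}, and it gives the degree $m-d+1$.

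The key step is a description of faces. We claim that a subset $S\subseteq V(J)$ is a face of $K(J)$ if and only if the set
\[
\tau(S)=\{i\in[m]\mid B_i\subseteq S\}
\]
is a face of $K$. To prove this, note that $S$ is a non-face exactly when it contains some minimal non-face $I(J)=\bigcup_{i\in I}B_i$ with $I\in\MF(K)$. This happens exactly when $I\subseteq\tau(S)$ for some $I\in\MF(K)$, that is, when $\tau(S)\notin K$. Here we use that the minimal non-faces determine the complex.

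From this description, the maximal faces are obtained as follows. Given a face $S$ with $\sigma=\tau(S)\in K$, we enlarge $S$ in two ways. First, we enlarge $\sigma$ to a facet $F\supseteq\sigma$ of $K$ and adjoin all of $B_i$ for $i\in F$. Second, for each block $B_i$ with $i\notin F$, we include all but one vertex of $B_i$. The result is still a face, since its $\tau$ equals $F$. Conversely, suppose $G$ is a facet of $K(J)$. Then $\tau(G)=F$ must be a facet of $K$, since otherwise we could add a whole block and remain a face. Also, $G$ must miss exactly one vertex from each $B_i$ with $i\notin F$, since missing two vertices would let us add one of them while keeping $\tau$ unchanged. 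Therefore
\[
|G|=\sum_{i\in F}j_i+\sum_{i\notin F}(j_i-1)=\sum_i j_i-(m-d).
\]

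The main (mild) obstacle is this characterization of facets, in particular verifying maximality in both directions. Once it is established, the computation of the degree is immediate. The degree is $m-d+1$, since the monomial $g_G$ consists exactly of the $m-d$ omitted vertices, one from each block $B_i$ with $i\notin F$, and multiplying by $y_G$ adds one more. Because this count does not involve $J$, the degree is independent of $J$.
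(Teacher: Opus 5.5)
Your proposal is correct and follows essentially the same route as the paper. Your face criterion via fully-contained blocks is Lemma~\ref{lem:membership}, your facet description (each facet $G$ has $\tau(G)=F$ a facet of $K$ and omits exactly one vertex from each $B_i$ with $i\notin F$) is Lemma~\ref{prop:facets}, and your count $|V(J)\setminus G|=m-d$, giving degree $m-d+1$, is Lemma~\ref{eq:complement-size} together with the paper's final computation.
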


Recall that $B_i=\{i1,\ldots,ij_i\}\subset V(J)$ denotes the block corresponding to $i\in \VVert(K)$.
For a simplex $\hat \sigma\subseteq V(J)$ of $K(J)$,
we say that $B_i$ is {\it fully-contained} in $\hat\sigma$ if $B_i\subseteq\hat\sigma$.
Set
\[
   S(\hat \sigma)=\{i\in[m]: B_i\subseteq\hat\sigma\},
\]
which is the set of indices of blocks fully-contained in $\hat\sigma$.

\begin{lemma}\label{lem:membership}
A subset $\hat\sigma\subseteq V(J)$ is a simplex of $K(J)$ if and only if $S(\hat\sigma)$ is a simplex of $K$.
\end{lemma}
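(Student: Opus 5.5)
The plan is to work directly from the definition of $K(J)$ through its minimal non-faces. A subset $\hat\sigma\subseteq V(J)$ is a simplex of $K(J)$ exactly when it contains no minimal non-face of $K(J)$. The minimal non-faces are the sets $I(J)=\bigcup_{i\in I}B_i$ with $I\in\MF(K)$. Likewise, a subset $S\subseteq[m]$ is a simplex of $K$ exactly when it contains no $I\in\MF(K)$. Since $K$ has no ghost vertices, $\VVert(K)=[m]$ and $\MF(K)$ determines $K$, so this description of simplices of $K$ is valid. The proof therefore reduces to one combinatorial equivalence: for each $I\in\MF(K)$,
\[
I(J)\subseteq\hat\sigma \iff I\subseteq S(\hat\sigma).
\]

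This equivalence is immediate from the definitions. If $I(J)\subseteq\hat\sigma$, then each block $B_i$ with $i\in I$ lies in $\hat\sigma$, so $i\in S(\hat\sigma)$. Conversely, if $I\subseteq S(\hat\sigma)$, then $B_i\subseteq\hat\sigma$ for every $i\in I$, and hence the union $I(J)$ is contained in $\hat\sigma$.

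Combining the two steps, $\hat\sigma$ contains no $I(J)$ if and only if $S(\hat\sigma)$ contains no $I\in\MF(K)$. This is precisely the statement that $\hat\sigma\in K(J)$ if and only if $S(\hat\sigma)\in K$.

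The only point needing care is that the sets $I(J)$ really are the minimal non-faces of the complex they generate, so that ``simplex $=$ contains no $I(J)$'' is a legitimate description of $K(J)$. I would sidestep this by taking $K(J)$ to be the complex of all subsets of $V(J)$ containing none of the $I(J)$, which is how the definition is meant. Minimality of the $I(J)$ also holds directly: distinct minimal non-faces $I\ne I'$ are incomparable, the assignment $I\mapsto I(J)$ preserves inclusion in both directions because the blocks are disjoint and nonempty, and so the $I(J)$ are pairwise incomparable. Nothing else is expected to be an obstacle.
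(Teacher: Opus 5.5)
Your proof is correct and follows the paper's argument essentially verbatim: both reduce the claim to the equivalence $I(J)\subseteq\hat\sigma \iff I\subseteq S(\hat\sigma)$ for each $I\in\MF(K)$. Your extra check that the sets $I(J)$ are pairwise incomparable, so that they really are the minimal non-faces of $K(J)$, is correct; the paper leaves this implicit in its definition.
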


\begin{proof}
By definition, $\hat\sigma\notin K(J)$ if and only if it contains a minimal non-face $I(J)=\bigcup_{i\in I}B_i$ for some $I\in\MF(K)$.  This is equivalent to $I\subseteq S(\hat\sigma)$ for some $I\in\MF(K)$, which is equivalent to $S(\hat\sigma)\notin K$.
\end{proof}

The facets of the simplicial multiwedge are described as follows.

\begin{lemma}\label{prop:facets}
Let $F_j$ be a facet of $K$.  
Choose one vertex $b_i\in B_i$ for each $i\notin F_j$ and denote the set of these vertices by ${\bf b}_j$.
Set
\begin{equation}\label{eq:facet-lift}
   \widehat F_{j,{\bf b}_j}
   =\Bigl(\bigcup_{i\in F_j}B_i\Bigr)
   \cup
   \Bigl(\bigcup_{i\notin F_j}(B_i\setminus\{b_i\})\Bigr).
\end{equation}
Then $\widehat F_{j,{\bf b}_j}$ is a facet of $K(J)$, and every facet of $K(J)$ is defined uniquely in this way.

Consequently, it follows that
\begin{equation}\label{eq:facet-size}
   \card{\widehat F_{j,{\bf b}_j}}
   =\sum_{i=1}^m(j_i-1)+\card{F_j}
\end{equation}
and
\begin{equation}\label{eq:facet-number}
   \card{\Fac(K(J))}
   =\sum_{F_j\in\Fac(K)}\prod_{i\notin F_j}j_i.
\end{equation}
\end{lemma}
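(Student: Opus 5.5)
The whole argument rests on Lemma~\ref{lem:membership}: a subset $\hat\sigma\subseteq V(J)$ is a simplex of $K(J)$ exactly when $S(\hat\sigma)\in K$. So I would first show that every facet $\hat\sigma$ of $K(J)$ has a rigid shape. Specifically, $S(\hat\sigma)$ is a facet of $K$, and $\hat\sigma$ misses exactly one vertex from each block $B_i$ with $i\notin S(\hat\sigma)$.

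For the maximality direction, take a face $\hat\sigma$ and set $\sigma=S(\hat\sigma)\in K$. I will show that $\hat\sigma$ can be enlarged unless it has the stated form, in three moves.
\begin{enumerate}
\item If some block $B_i$ with $i\notin\sigma$ misses two or more vertices of $\hat\sigma$, add one of the missing vertices. The block is still not fully contained, so $S$ is unchanged and the result is still a face.
\item If some block $B_i$ with $i\in\sigma$ is not fully contained, this contradicts $i\in S(\hat\sigma)$, so this case cannot occur.
\item If $\sigma$ is not a facet, choose a facet $F\supsetneq\sigma$ of $K$ and add all of $B_i$ for $i\in F\setminus\sigma$. Then $S$ becomes $F\in K$, so the result is still a face.
\end{enumerate}
Hence a facet has $S(\hat\sigma)=F_j$ for some facet $F_j$ of $K$, and $\hat\sigma$ misses exactly one vertex $b_i$ in each block with $i\notin F_j$. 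This is precisely $\widehat F_{j,{\bf b}_j}$.

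For the converse I must check two things about $\widehat F_{j,{\bf b}_j}$. It is a face, because $S(\widehat F_{j,{\bf b}_j})=F_j\in K$. It is maximal: adding any vertex $b_i$ completes the block $B_i$, so $S$ becomes $F_j\cup\{i\}$, which is not in $K$ since $F_j$ is a facet. Adding a vertex therefore leaves $K(J)$, again by Lemma~\ref{lem:membership}.

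For uniqueness, $F_j=S(\hat\sigma)$ is recovered from the facet itself, and ${\bf b}_j=V(J)\setminus\hat\sigma$ is recovered as the complement. So the map $(j,{\bf b}_j)\mapsto\widehat F_{j,{\bf b}_j}$ is injective. One point needs care: if $j_i=1$ and $i\notin F_j$, then $b_i$ is forced, since $B_i\setminus\{b_i\}=\emptyset$. The argument goes through unchanged in that case.

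The counting formulas then follow directly. The size is
\[
\sum_{i\in F_j}j_i+\sum_{i\notin F_j}(j_i-1)=\sum_{i=1}^m(j_i-1)+|F_j|,
\]
which is \eqref{eq:facet-size}. For each facet $F_j$ there are $\prod_{i\notin F_j}j_i$ possible choices of ${\bf b}_j$, and summing over $F_j$ gives \eqref{eq:facet-number}.

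The only delicate step is the maximality argument in the first part, which needs all three enlargement moves. Each move is a direct application of Lemma~\ref{lem:membership}, so I do not expect a real obstacle.
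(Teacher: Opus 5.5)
Your proposal is correct and follows essentially the same route as the paper. Both arguments use Lemma~\ref{lem:membership} to check that $S(\widehat F_{j,\mathbf b_j})=F_j$ and that $\widehat F_{j,\mathbf b_j}$ is maximal, and both use the same two enlargement moves to show that any facet $\hat\sigma$ has $S(\hat\sigma)$ a facet of $K$ and omits exactly one vertex from each remaining block; your explicit recovery of $(F_j,\mathbf b_j)$ as $\bigl(S(\hat\sigma),\,V(J)\setminus\hat\sigma\bigr)$ simply spells out the uniqueness that the paper leaves implicit.
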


\begin{proof}
From~\eqref{eq:facet-lift}, we obtain $S(\widehat F_{j,{\bf b}_j})=F_j$, since
the indices of the fully-contained blocks of $\widehat F_{j,{\bf b}_j}$ are exactly the vertices of $F_j$.
Therefore, $\widehat F_{j,{\bf b}_j}$ is a simplex of $K(J)$ by Lemma~\ref{lem:membership}.
Furthermore, $\widehat F_{j,{\bf b}_j}$ is maximal. Indeed, if an omitted vertex $b_i$, $i\not\in F_j$, is added to $\widehat F_{j,{\bf b}_j}$, then $S(\widehat F_{j,{\bf b}_j}\cup\{b_i\})=F_j\cup\{i\}$, which is not a simplex since $F_j$ is a facet.

Conversely, let $\hat \sigma$ be a facet of $K(J)$.  
By Lemma~\ref{lem:membership}, $S(\hat\sigma)\in K$. 
If $S(\hat\sigma)$ is not a facet, 
there exists $i\notin S(\hat\sigma)$ such that $S(\hat\sigma)\cup\{i\}\in K$.
Since the corresponding block $B_i$ is not fully-contained in $\hat\sigma$, adding all the missing vertices of $B_i$ would produce a larger simplex of $K(J)$, contradicting the maximality of $\hat\sigma$.
Thus, $S(\hat\sigma)$ is a facet of $K$.

If, for some $i\notin \sigma$, at least two vertices of $B_i$ do not belong to $\hat\sigma$, then one of them could be added to $\hat\sigma$ without making $B_i$ fully-contained in $\hat\sigma$, 
and hence without changing $S(\hat\sigma)$.
This would again contradict the maximality of $\hat\sigma$.  
Therefore exactly one vertex is omitted from every block $B_i$ with $i\notin \sigma$, while each block $B_i$ with $i\in \sigma$ is fully-contained in $\hat\sigma$. 
This proves the description and its uniqueness.

From~\eqref{eq:facet-lift}, we obtain~\eqref{eq:facet-size},
and summing the number of independent choices of 
the omitted vertex $b_i$ over all facets $F_j$ gives \eqref{eq:facet-number}.
\end{proof}

\begin{corollary}\label{cor:purity}
The complex $K(J)$ is pure if and only if $K$ is pure.
\end{corollary}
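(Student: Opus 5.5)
The plan is to read purity off directly from the facet-size formula~\eqref{eq:facet-size} in Lemma~\ref{prop:facets}. By that lemma, every facet of $K(J)$ has the form $\widehat F_{j,{\bf b}_j}$ for some facet $F_j$ of $K$. Its cardinality is
\[
   \card{\widehat F_{j,{\bf b}_j}}=C_J+\card{F_j},
   \qquad
   C_J=\sum_{i=1}^m(j_i-1).
\]
The constant $C_J$ depends only on $J$, not on the facet $F_j$ or on the choice ${\bf b}_j$. So the facet sizes of $K(J)$ are exactly the facet sizes of $K$, each shifted by the same constant $C_J$.

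For the forward direction, assume $K$ is pure, so $\card{F_j}=d$ for all $j$. Then every facet of $K(J)$ has cardinality $C_J+d$, and $K(J)$ is pure.

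For the converse, assume $K(J)$ is pure. Let $F_j$ be any facet of $K$. Since $j_i\ge1$, each block $B_i$ is nonempty, so we can choose some ${\bf b}_j$. By Lemma~\ref{prop:facets}, $\widehat F_{j,{\bf b}_j}$ is then a facet of $K(J)$. Purity of $K(J)$ forces $C_J+\card{F_j}$ to be the same for every $j$. Hence $\card{F_j}$ is constant, and $K$ is pure.

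There is no real obstacle here. The only point needing care is the existence of at least one lift $\widehat F_{j,{\bf b}_j}$ for each facet $F_j$, which uses $j_i>0$, i.e.\ that every block $B_i$ is nonempty. Everything else is already contained in Lemma~\ref{prop:facets}.
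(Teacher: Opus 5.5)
Your proposal is correct and follows the paper's own argument: both read purity directly off the facet-size formula~\eqref{eq:facet-size}, where every facet size of $K$ is shifted by the same constant $\sum_{i=1}^m(j_i-1)$. For the converse direction you also make explicit that every facet of $K$ has at least one lift because each block $B_i$ is nonempty. The paper leaves this point implicit, so your version is slightly more careful.
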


\begin{proof}
By \eqref{eq:facet-size}, the cardinality of a facet of $K(J)$ differs from the cardinality of the corresponding facet of $K$ by the constant $\sum_{i=1}^m(j_i-1)$.
\end{proof}

\begin{lemma}\label{eq:complement-size}
$\card{V(J)\setminus\widehat F_{j,{\bf b}_j}}=m-|F_j|$ for each $j=1,\ldots,r$ and ${\bf b}_j$.
In particular, this cardinality is independent of $J$
and of the choice of $b_i$ for $i\not\in F_j$ in Lemma~\ref{prop:facets}.
\end{lemma}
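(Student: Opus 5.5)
The plan is a direct count using the explicit description of facets in Lemma~\ref{prop:facets}. By \eqref{eq:facet-lift}, the facet $\widehat F_{j,{\bf b}_j}$ contains every block $B_i$ with $i\in F_j$ in full. For each $i\notin F_j$ it contains all of $B_i$ except the single chosen vertex $b_i$. Since $V(J)=B_1\sqcup\cdots\sqcup B_m$ is a disjoint union, the complement decomposes blockwise:
\[
   V(J)\setminus\widehat F_{j,{\bf b}_j}
   =\bigsqcup_{i=1}^m\bigl(B_i\setminus\widehat F_{j,{\bf b}_j}\bigr).
\]

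I would then evaluate each piece. For $i\in F_j$ the piece $B_i\setminus\widehat F_{j,{\bf b}_j}$ is empty. For $i\notin F_j$ it is exactly $\{b_i\}$, because $B_i\setminus\{b_i\}\subseteq\widehat F_{j,{\bf b}_j}$. Also, no other block contributes vertices of $B_i$ to $\widehat F_{j,{\bf b}_j}$, again by disjointness of the blocks. Hence the complement is precisely ${\bf b}_j=\{b_i\mid i\notin F_j\}$, a set with one element for each $i\in[m]\setminus F_j$, so its cardinality is $m-|F_j|$.

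As a cross-check, one can subtract \eqref{eq:facet-size} from $|V(J)|=\sum_{i=1}^m j_i$, which gives $\sum_i j_i-\sum_i(j_i-1)-|F_j|=m-|F_j|$. The independence statement is then immediate, since the value $m-|F_j|$ involves neither $J$ nor the choice of the $b_i$.

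There is no real obstacle here. The only point needing care is that the blocks are pairwise disjoint, so that removing $b_i$ from $B_i$ is not compensated by vertices of another block. This holds by the construction of $V(J)$.
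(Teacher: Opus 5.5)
Your proof is correct. The paper's proof is exactly your cross-check: it subtracts \eqref{eq:facet-size} from $|V(J)|=\sum_{i=1}^m j_i$. Your primary argument, which identifies the complement explicitly as ${\bf b}_j=\{b_i\mid i\notin F_j\}$, is an equally valid and slightly more informative route, since it also shows that the monomial attached to $\widehat F_{j,{\bf b}_j}$ in $\Phi_{K(J)}$ is $\prod_{i\notin F_j}x_{b_i}$.
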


\begin{proof}
The assertion follows from~\eqref{eq:facet-size} and the fact $|V(J)|=\sum_{i=1}^mj_i$.
\end{proof}

Now we prove Proposition~\ref{thm:degree-invariance}.

\begin{proof}[Proof of Proposition~\ref{thm:degree-invariance}]
By Corollary~\ref{cor:purity}, $K(J)$ is pure, so Theorem~\ref{thm:Suciu-realization} shows that $\Phi_{K(J)}$ is homogeneous. 
Every term of $\Phi_{K(J)}$ has the form
\[
   y_{\widehat F}\prod_{ij_i\notin \widehat F}x_{ij_i},
\]
where the variables $(x_{i1},\ldots,x_{mj_m})$ correspond to the set $V(J)$ of vertices of $K(J)$
and the variable $y_{\widehat F}$ corresponds to a facet $\widehat F$ of $K(J)$.
Note that the number of variables $y_{\widehat F}$ is given in~\eqref{eq:facet-number}.
The number of the variables $x_{ij_i}$ appearing in the above product is $|V(J)|-\card{\widehat F}$, 
where $|\widehat F|$ is given in~\eqref{eq:facet-size}.
Therefore, it follows from Lemma~\ref{eq:complement-size} that
the degree of each term of $\Phi_{K(J)}$ is 
\[
   |V(J)|-\card{\widehat F}+1
   =|V(J)\setminus \widehat F|+1
   =m-|S(\widehat F)|+1=m-d+1.
\]
Thus the assertion follows.
\end{proof}

\section{The Kato--Matsumoto defect under simplicial multiwedges}\label{sec:KMmultiwedge}

\subsection{Defects of multiwedges}

Let $K$ be a simplicial complex on $[m]$ and $J\in\bbZ_{>0}^m$. For $Z\subseteq \VVert(K)$, set
\[
   w_J(Z)=\sum_{i\in Z}j_i,
   \qquad
   \nu_J(K)=\min_{I\in\MF(K)}w_J(I).
\]
It follows from the definition of $K(J)$ that $\nu_J(K)=\nu(K(J))$. 
For a nonempty subfamily $\mathcal A\subseteq\MF(K)$,
set
\[
   \mathcal U(\mathcal A)=\bigcup_{I\in\mathcal A}I,
   \qquad
   \mathcal I(\mathcal A)=\bigcap_{I\in\mathcal A}I.
\]

\begin{theorem}\label{thm:union-intersection}
For every non-simplex simplicial complex $K$ on the set $[m]$ and every $J\in\bbZ_{>0}^m$,
\begin{equation}\label{eq:union-intersection}
\kappa(K(J))
   =\min_{\varnothing\ne\mathcal A\subseteq\MF(K)}
      \bigl(w_J(\mathcal U(\mathcal A))+w_J(\mathcal I(\mathcal A))\bigr),
\end{equation}
and therefore
the Kato--Matsumoto defect $\delta_{\mathrm{KM}}(K(J))$ is given by
\[
   \delta_{\mathrm{KM}}(K(J))
   =2\nu_J(K)-\kappa(K(J)).
\]
In particular, 
every positive defect is caused by at least three minimal non-faces.
\end{theorem}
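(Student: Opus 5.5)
The plan is to compute $\kappa(L)$ for $L=K(J)$ directly from~\eqref{eq:kappa} and~\eqref{eq:rho}, using the explicit description $\MF(K(J))=\{I(J)\mid I\in\MF(K)\}$ with $I(J)=\bigcup_{i\in I}B_i$. Since $K$ is not a simplex, $\MF(K)\neq\varnothing$, so both sides of~\eqref{eq:union-intersection} are minima over nonempty sets. For a nonface $Z\subseteq V(J)$ of $L$, let
\[
\mathcal A_Z=\{I\in\MF(K)\mid I(J)\subseteq Z\}\neq\varnothing .
\]
By the second expression in~\eqref{eq:rho},
\[
\rho_L(Z)=\Bigl|\bigcap_{I\in\mathcal A_Z}I(J)\Bigr|=w_J(\mathcal I(\mathcal A_Z)).
\]
This holds because intersections of unions of disjoint blocks are unions of blocks: $\bigcap_{I\in\mathcal A}I(J)=\bigl(\mathcal I(\mathcal A)\bigr)(J)$.

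\textbf{Step 1: reduction to unions of minimal non-faces.} Suppose a vertex $v\in Z$ lies in no $I(J)$ with $I\in\mathcal A_Z$. Then $Z\setminus\{v\}$ contains exactly the same minimal non-faces, so it is still a nonface, $\rho$ is unchanged, and $|Z|$ drops by one. Hence the minimum in~\eqref{eq:kappa} is attained at some $Z$ equal to the union of the minimal non-faces it contains. For such $Z$ we have $Z=\mathcal U(\mathcal A_Z)(J)$, so $|Z|=w_J(\mathcal U(\mathcal A_Z))$. This gives
\[
\kappa(L)\ \ge\ \min_{\mathcal A}\bigl(w_J(\mathcal U(\mathcal A))+w_J(\mathcal I(\mathcal A))\bigr).
\]

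\textbf{Step 2: the reverse inequality.} Given any nonempty $\mathcal A\subseteq\MF(K)$, set $Z=\mathcal U(\mathcal A)(J)$. This is a nonface of $L$, and its family $\mathcal A_Z$ consists of all $I\in\MF(K)$ with $I\subseteq\mathcal U(\mathcal A)$, so $\mathcal A_Z\supseteq\mathcal A$. Consequently $\mathcal I(\mathcal A_Z)\subseteq\mathcal I(\mathcal A)$, and
\[
|Z|+\rho_L(Z)\le w_J(\mathcal U(\mathcal A))+w_J(\mathcal I(\mathcal A)).
\]
Together with Step 1 this proves~\eqref{eq:union-intersection}. The defect formula then follows from Definition~\ref{def:KMdefect} and $\nu(K(J))=\nu_J(K)$. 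The only subtle point I expect is the bookkeeping that $\mathcal A_Z$ may be strictly larger than $\mathcal A$, which is exactly why only an inequality is needed in Step 2.

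\textbf{Step 3: positive defect needs at least three minimal non-faces.} Suppose $|\mathcal A|=1$, say $\mathcal A=\{I\}$. Then the value in~\eqref{eq:union-intersection} is $2w_J(I)\ge 2\nu_J(K)$. Suppose $\mathcal A=\{I_1,I_2\}$. By inclusion--exclusion, $w_J(I_1\cup I_2)+w_J(I_1\cap I_2)=w_J(I_1)+w_J(I_2)\ge 2\nu_J(K)$. So any $\mathcal A$ achieving a value below $2\nu_J(K)$ must have at least three members.
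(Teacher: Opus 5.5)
Your proof is correct and follows essentially the same route as the paper. The paper first trims a minimizer to a union of fully-contained blocks and then replaces $W$ by $\mathcal U(\mathcal A_W)$, which you merge into a single vertex-deletion step; your reverse inequality (via $\mathcal A_Z\supseteq\mathcal A$, hence $\mathcal I(\mathcal A_Z)\subseteq\mathcal I(\mathcal A)$) and your singleton/pair inclusion--exclusion argument coincide with the paper's.
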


\begin{proof}
Let $\widehat Z$ be a non-face of $K(J)$,
and $S(\widehat Z)$ be the index set of fully-contained blocks of $\widehat Z$, that is,
$S(\widehat Z)=\{i\in[m] \mid B_i\subseteq \widehat Z\}$.
By the definition of $K(J)$,
the minimal non-faces contained in $\widehat Z$ are precisely 
$B_I=\bigcup_{i\in I}B_i$ for $I\in\MF(K)$ with $I\subseteq S(\widehat Z)$. Hence \eqref{eq:rho} gives
\begin{equation}\label{eq:rho-multiwedge}
   \rho_{K(J)}(\widehat Z)
   =w_J\left(\bigcap_{\substack{I\in\MF(K)\\ I\subseteq S(\widehat Z)}}I\,\right). \end{equation}
Deleting vertices of $\widehat Z$ belonging to a block that is not fully-contained in $\widehat Z$
affects neither the minimal non-faces $I$ in the brackets on the right-hand side of~\eqref{eq:rho-multiwedge} nor their intersections, while decreasing  $|\widehat Z|$.
Thus a minimizer in~\eqref{eq:kappa} may be chosen to be a union of blocks fully-contained in $\widehat Z$.

For $W\subseteq\VVert(K)$, put
\[
   \mathcal A_W=\{I\in\MF(K)\mid I\subseteq W\}.
\]
If $\mathcal A_W\ne\varnothing$, replacing $W$ by $\mathcal U(\mathcal A_W)$ changes neither $\mathcal A_W$ nor the intersections of sets in $\mathcal A_W$, while possibly decreasing $w_J(W)$. Consequently,
\[
   \kappa(K(J))=
   \min_{W:\mathcal A_W\ne\varnothing}
   \bigl(w_J(W)+w_J(\mathcal I(\mathcal A_W))\bigr)
\]
is attained with $W=\mathcal U(\mathcal A_W)$. 
Here we used the equality $\rho_{K(J)}(W^{full})=w_J(W)$, where $W^{full}$ is the union of the blocks corresponding to the vertices of $W$, together with the fact that a minimizer in~\eqref{eq:kappa} may be chosen to be a union of blocks, as explained in the first paragraph.
This shows that the right-hand side of \eqref{eq:union-intersection} is at most $\kappa(K(J))$.

Conversely, let $\varnothing\ne\mathcal A\subseteq\MF(K)$ and set $W=\mathcal U(\mathcal A)$. The family $\mathcal A_W$ may be larger than $\mathcal A$, but then $\mathcal I(\mathcal A_W)\subseteq \mathcal I(\mathcal A)$. Hence
\[
   \kappa(K(J))
   \le w_J(W)+w_J(\mathcal I(\mathcal A_W))
   \le w_J(\mathcal U(\mathcal A))+w_J(\mathcal I(\mathcal A)).
\]
Taking the minimum over $\mathcal A$ gives the reverse inequality.

To prove the last assertion, it suffices to show that neither a single minimal non-face nor a pair of minimal non-faces can cause a positive defect.
For a singleton $\mathcal A=\{I\}$, the value is $2w_J(I)\ge2\nu_J(K)$. For $\mathcal A=\{I_1,I_2\}$, finite additivity gives
\[
   w_J(I_1\cup I_2)+w_J(I_1\cap I_2)
   =w_J(I_1)+w_J(I_2)\ge2\nu_J(K).
\]
This completes the proof.
\end{proof}

\subsection{Flag complexes and weighted triangles}\label{sec:flagKM}

A simplicial complex is called a {\it flag} complex if
every set of vertices that are pairwise connected by edges spans a simplex of the complex.
Suppose that a simplicial complex $K$ is flag and not a simplex. 
Its minimal non-faces are edges. 
Let $\Edge(G)$ denote the set of edges of a graph $G$. An edge connecting vertices $u$ and $v$ is denoted by $uv$. 

Let $H$ be the {\it missing-edge graph} of $K$, whose edges connect the pairs of vertices ${u,v}$ 
that are not simplices of $K$, that is,
\[
   uv\in \Edge(H)\quad\Longleftrightarrow\quad \{u,v\}\notin K.
\]
Note that $K$ coincides with the independence complex of the graph $H$.
Indeed, since the edges of $H$ are precisely the missing edges of the flag complex $K$, a subset $\sigma$ is a simplex of $K$ if and only if it contains no edge of $H$.

Set
\[
\begin{split}
   \varepsilon_J(H)&=\min\{w_J(e)\mid \text{$e$ is an edge of $H$}\}, \\
   \tau_J(H)&=\min\{w_J(T)\mid \text{$T$ is a triangle of $H$}\},
\end{split}
\]
with $\tau_J(H)=+\infty$ if $H$ is triangle-free.

\begin{theorem}\label{thm:weighted-triangle}
For every flag complex $K$ and every $J\in\bbZ_{>0}^m$, the following statements hold:
\begin{itemize}
\item[(1)] $\kappa(K(J))=\min\{2\varepsilon_J(H),\tau_J(H)\}$; 
\item[(2)] $\delta_{\mathrm{KM}}(K(J))=\max\{0,2\varepsilon_J(H)-\tau_J(H)\}$.
\end{itemize}
\end{theorem}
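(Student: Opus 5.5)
The plan is to specialize Theorem~\ref{thm:union-intersection} to the flag case, where $\MF(K)=\Edge(H)$. By \eqref{eq:union-intersection} we have
\[
\kappa(K(J))=\min_{\varnothing\ne\mathcal A\subseteq\Edge(H)}\bigl(w_J(\mathcal U(\mathcal A))+w_J(\mathcal I(\mathcal A))\bigr),
\]
so statement (1) reduces to a combinatorial minimization over nonempty sets $\mathcal A$ of edges of $H$. Statement (2) will then follow at once: $\nu_J(K)=\varepsilon_J(H)$ because the minimal non-faces are exactly the edges of $H$, hence
\[
\delta_{\mathrm{KM}}(K(J))=2\varepsilon_J(H)-\min\{2\varepsilon_J(H),\tau_J(H)\}=\max\{0,2\varepsilon_J(H)-\tau_J(H)\}.
\]
The degenerate case where $K$ is a simplex, so $H$ has no edges, is excluded, since the statement presupposes that $\varepsilon_J(H)$ is finite.

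For the upper bound in (1) I would exhibit two kinds of families. A single edge $\mathcal A=\{e\}$ gives the value $2w_J(e)$, so taking a minimizing edge yields $2\varepsilon_J(H)$. For a triangle $T=\{a,b,c\}$ of $H$, take $\mathcal A=\{ab,bc,ca\}$. Then $\mathcal U(\mathcal A)=T$ and $\mathcal I(\mathcal A)=\varnothing$, so the value is $w_J(T)$, and minimizing over triangles gives $\tau_J(H)$.

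For the lower bound, let $\mathcal A$ be any nonempty family of edges, and split into cases according to $\mathcal I(\mathcal A)$.
\begin{itemize}
\item[(a)] If $\mathcal A$ is a single edge, the value is at least $2\varepsilon_J(H)$.
\item[(b)] If $\mathcal A$ has at least two edges and $\mathcal I(\mathcal A)\neq\varnothing$, the edges share a common vertex $v$, since two distinct edges meet in at most one vertex. So $\mathcal A$ is a star $\{vu_1,\dots,vu_k\}$ with $k\ge2$. The value is $w_J(v)+\sum_i j_{u_i}+w_J(v)\ge w_J(vu_1)+w_J(vu_2)\ge 2\varepsilon_J(H)$.
\item[(c)] If $\mathcal I(\mathcal A)=\varnothing$, the value is $w_J(\mathcal U(\mathcal A))$. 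Here I must show $w_J(\mathcal U(\mathcal A))\ge\min\{2\varepsilon_J(H),\tau_J(H)\}$.
\end{itemize}

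In case (c), if $\mathcal A$ contains two disjoint edges $e_1,e_2$, then $w_J(\mathcal U)\ge w_J(e_1)+w_J(e_2)\ge 2\varepsilon_J(H)$, using positivity of the weights. Otherwise the edges are pairwise intersecting but have no common vertex. A pairwise intersecting family of edges of a graph is either a star or contained in a triangle, and since it is not a star it must be exactly a triangle $T$, giving the value $w_J(T)\ge\tau_J(H)$.

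I expect this pairwise-intersecting step to be the main point needing care. The argument I would give: take edges $ab$ and $ac$ in $\mathcal A$. Some edge of $\mathcal A$ avoids $a$, and it must meet both $ab$ and $ac$, so it is $bc$. Any further edge must meet $ab$, $ac$ and $bc$ without passing through all of them, which forces it to lie inside $\{a,b,c\}$.
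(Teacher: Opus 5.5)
Your proposal is correct and follows essentially the same route as the paper: you specialize Theorem~\ref{thm:union-intersection} to nonempty families of edges of $H$, split them into a single edge, a star, a family containing two disjoint edges, and the triangle, and then read off (2) from $\nu_J(K)=\varepsilon_J(H)$. The one addition is that you make the upper bound explicit, since a minimizing edge and a minimizing triangle realize $2\varepsilon_J(H)$ and $\tau_J(H)$; the paper leaves this step implicit.
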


\begin{proof}
We apply Theorem~\ref{thm:union-intersection} to a nonempty family $\mathcal A$ of edges of $H$. If all edges share a common endpoint $c$, then either $\mathcal A$ is a singleton $ca$ or it contains distinct edges $ca$ and $cb$.
In the former case, $w_J(U(\mathcal A))+w_J(I(\mathcal A))=2w_J(ca)\ge2\varepsilon_J(H)$.
In the latter case,
\[
   w_J(U(\mathcal A))+w_J(I(\mathcal A))
   \ge2j_c+j_a+j_b
   =w_J(ca)+w_J(cb)\ge2\varepsilon_J(H).
\]
If there is a pair of disjoint edges $ab$ and $cd$ in $\mathcal A$, then $w_J(ab)+w_J(cd)$ is already at least $2\varepsilon_J(H)$. 
Thus, it remains to consider a pairwise-intersecting edge family with no common endpoint. 
Choose $ab,ac\in\mathcal A$. Any edge not containing $a$ but intersecting both $ab$ and $ac$ must be $bc$.
Any other edge intersecting $ab$, $ac$, and $bc$ must be one of these three. 
Hence $\mathcal A$ consists of the edges of a triangle, and the contribution to $\kappa(K(J))$ in~\eqref{eq:union-intersection} is at least $j_a+j_b+j_c$ for a triangle $\{a,b,c\}$ in $H$, which is at least
$\tau_J(H)$.
Thus, assertion~(1) follows. Assertion~(2) follows from Definition~\ref{def:KMdefect},
together with $\varepsilon_J(H)=\nu_J(K)$.
\end{proof}


The Kato--Matsumoto estimate for $K(J)$ is sharp if and only if
\[
   w_J(T)\ge2\min_{e\in \Edge(H)}w_J(e)
\]
for every triangle $T\subset H$. Its positive sharpness locus is the finite union of rational polyhedral cones
\[
   C(K)=\bigcup_{e\in \Edge(H)}C_e,
\]
where
\[
   C_e=\{J\in \bbZ_{>0}^m \mid w_J(e)\le w_J(e')\ \forall e'\in \Edge(H) \;\text{and}\;
   2w_J(e)\le w_J(T) \ \forall T\subset H\}.
\]
Thus, the sharp bound holds for almost every $m$-tuple $J$.
The following corollary characterizes those simplicial complexes $K$ satisfying $C(K)=\bbZ_{>0}^m$.

\begin{corollary}\label{cor:flag-universal}
Let $K$ be a flag complex with missing-edge graph $H$.  Then the following are equivalent.
\begin{enumerate}
\item $\delta_{\mathrm{KM}}(K(J))=0$ for every positive multiwedge $J$;
\item $H$ is triangle-free.
\end{enumerate}
If $J=(k,\ldots,k)$ with $k\geq 1$ and $H$ contains a triangle, then $\delta_{\mathrm{KM}}(K(J))=k$.
\end{corollary}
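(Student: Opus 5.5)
The whole argument rests on Theorem~\ref{thm:weighted-triangle}(2), which gives $\delta_{\mathrm{KM}}(K(J))=\max\{0,2\varepsilon_J(H)-\tau_J(H)\}$ for every $J\in\bbZ_{>0}^m$, with $\tau_J(H)=+\infty$ when $H$ is triangle-free. We prove the two implications and then compute the uniform case.

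For (2)$\Rightarrow$(1): if $H$ is triangle-free, then $\tau_J(H)=+\infty$. Hence $2\varepsilon_J(H)-\tau_J(H)=-\infty$ and the maximum is $0$, for every $J$. Here $\varepsilon_J(H)$ is finite, since $K$ is not a simplex and so $H$ has at least one edge. (If $K$ were a simplex, $H$ would have no edges and the statement would be vacuous or excluded by convention.)

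For (1)$\Rightarrow$(2), we argue by contrapositive. Suppose $H$ contains a triangle $T=\{a,b,c\}$, and take $J=(k,\ldots,k)$. Then every edge $e$ has weight $w_J(e)=2k$, so $\varepsilon_J(H)=2k$. Every triangle has weight $3k$, so $\tau_J(H)=3k$. Therefore
\[
\delta_{\mathrm{KM}}(K(J))=\max\{0,4k-3k\}=k>0 .
\]
Taking $k=1$ already violates (1). The same calculation proves the final assertion for every $k\ge1$.

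No step is a real obstacle. The only points needing care are that $\varepsilon_J$ is attained on an actual edge of $H$, which holds because $K$ is flag and not a simplex, and that the formula for $\delta_{\mathrm{KM}}$ is read correctly when $H$ has no triangles, i.e.\ with the convention $\tau_J=+\infty$.
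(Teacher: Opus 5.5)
Your proposal is correct and follows the paper's proof essentially verbatim. Both arguments read off Theorem~\ref{thm:weighted-triangle}(2): in the triangle-free case they use $\tau_J(H)=+\infty$, and for the uniform tuple $J=(k,\ldots,k)$ they use $\varepsilon_J(H)=2k$ and $\tau_J(H)=3k$ to get $\delta_{\mathrm{KM}}(K(J))=k>0$. Your side remark that $K$ must not be a simplex (so that $H$ has an edge and $\varepsilon_J(H)$ is finite) makes explicit a hypothesis the paper also uses implicitly.
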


\begin{proof}
If $H$ is triangle-free, then $\tau_J(H)=+\infty$, and hence Theorem~\ref{thm:weighted-triangle} 
implies~(1).
Conversely, if $H$ contains a triangle and $J=(k,\ldots, k)$, then $\varepsilon_J(H)=2k$ and $\tau_J(H)=3k$, so Theorem~\ref{thm:weighted-triangle}(2) gives $\delta_{\mathrm{KM}}(K(J))=k>0$.
\end{proof}

\section{Fixed-degree homogeneous non-formal Milnor fibers}\label{sec:main}

\subsection{Limonchenko's simplicial multiwedges}\label{sec:limonchenko}

We recall Limonchenko's $n$-dimensional $2$-truncated cube $P(n)$ introduced in~\cite[Definition~3.5]{Limonchenko2019}.
For a polytope $P$, let $K_P$ denote its {\it nerve complex}, that is, the simplicial complex whose vertices correspond to the facets of $P$, and whose simplices are the collections of facets with nonempty intersection.

For $n\geq 2$,
let $K(n)$ be the simplicial complex on the set $[2n]$ with
\[
   \MF(K(n))=\{(k, n+k+i)\mid 0\leq i\leq n-2,\; 1\leq k\leq n-i\}.
\]   
Let $Q(n)$ be the $n$-dimensional cube $[0,1]^n$, with facets $F_1,\ldots,F_{2n}$ labelled so that, for $i=1,\ldots,n$,
$F_i$ contains the origin of $\bbR^n$ and the inward normal vector of $F_i$ is the $i$-th standard basis vector,
while $F_{n+i}$ is parallel to $F_i$, see Figure~\ref{fig1}.
Then, 
define $P(n)$ to be the simple polytope with 
\begin{equation}\label{eq:mr}
   m=\frac{n(n+3)}2-1
\end{equation}
facets, which is obtained from $Q(n)$ by truncating the codimension-two faces so that
the induced subcomplex of $K_{P(n)}$ on the vertex set $[2n]$ is combinatorially equivalent to $K(n)$ 
(cf.~\cite[Definition~4.1]{Lim17}).
More precisely,
the truncations are applied to the faces $F_p\cap F_q$, where $\{p,q\} \in \MF(K(n))$.
Therefore, the number of truncations is $n(n-1)/2-1$.
Since $Q(n)$ has $2n$ facets, the number of facets of $P(n)$ is 
\[
2n+\frac{n(n-1)}{2}-1=\frac{n(n+3)}{2}-1=m,
\]
as claimed in~\eqref{eq:mr}.

\begin{example}\label{example51}
The polytope $P(3)$ is shown in Figure~\ref{fig1}.
In this case, 
\[
   \MF(K(3))=\{\{1,4\}, \{2,5\}, \{3,6\}, \{1,5\}, \{2,6\}\},
\]
among which $F_p\cap F_q\neq \varnothing$ only when $\{p,q\}=\{1,5\}$ or $\{2,6\}$.
Therefore, $P(3)$ is obtained from the cube $Q(3)$ by truncating the edges $F_1\cap F_5$ and 
$F_2\cap F_6$. This is the polyhedron in Figure~\ref{fig1}.
Since $P(3)$ has $8$ facets, $\VVert(K_{P(3)})=8$.
The missing-edge graph $H$ of $K_{P(3)}$ consists of the edges in $\MF(K(3))$, 
and therefore, $H$ is triangle-free.
Hence, $\delta_{KM}(K(J))=0$ holds for any $J\in\bbZ_{>0}^8$ by Corollary~\ref{cor:flag-universal}.
\end{example}

\begin{figure}[htbp]
\includegraphics[scale=0.8, bb=174 557 398 712]{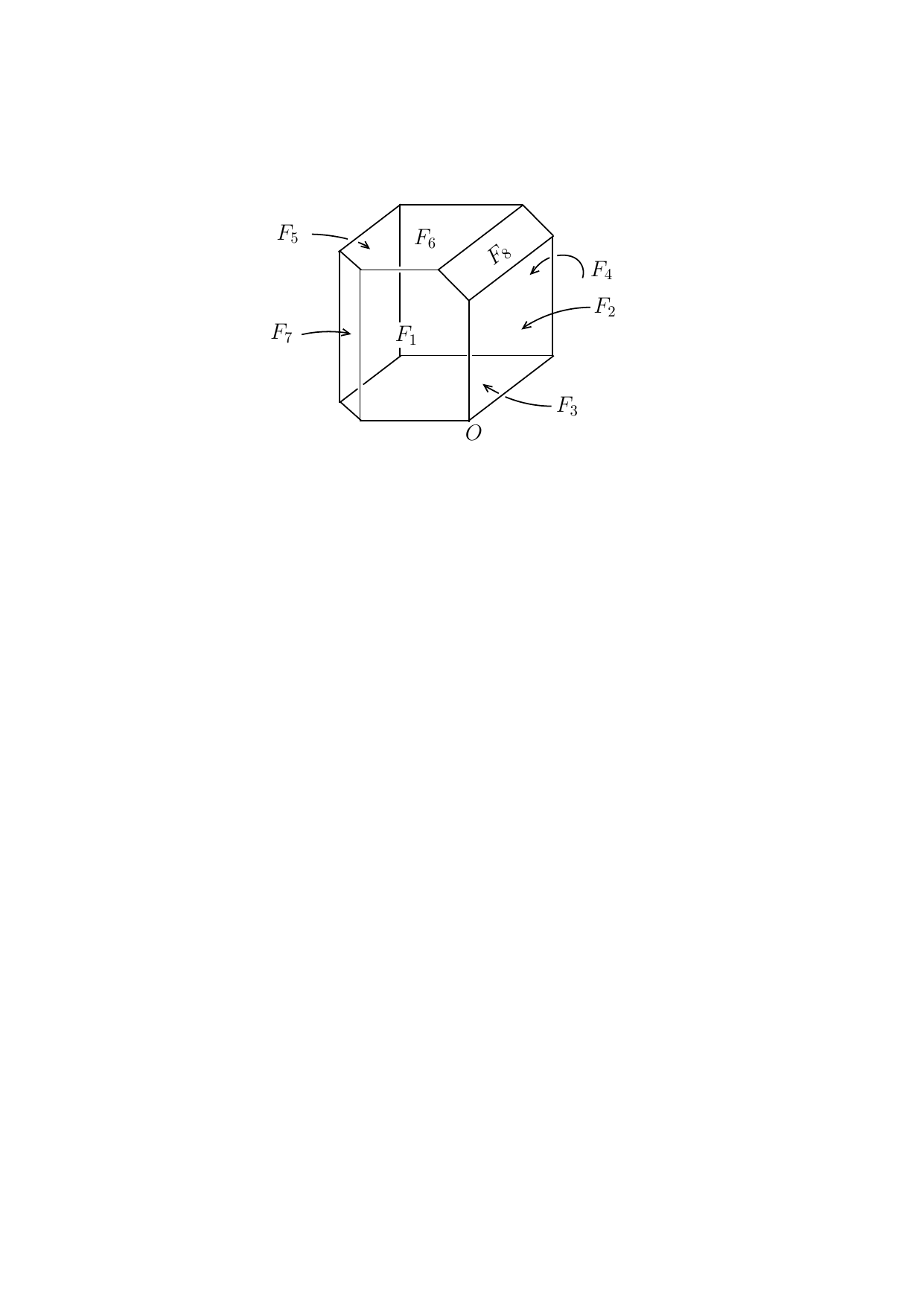}
\caption{Limonchenko's polytope $P(3)$.}
\label{fig1}
\end{figure}

The nerve complex $K_{P(n)}$ of $P(n)$ is a flag polytopal $(n-1)$-sphere. 
Hence, it is pure and has no ghost vertices. 
Let $A=\{1,\ldots,n\}$ and $B=\{n+1,\ldots,2n\}$ be the sets of vertices of $K_{P(n)}$ 
corresponding to the facets $F_1,\ldots, F_n$ and $F_{n+1},\ldots, F_{2n}$ of $P(n)$, respectively.
The full subcomplex of $K_{P(n)}$ on $A\sqcup B$ is the complex $K(n)$
and each minimal non-face of $K_{P(n)}$ on $A\sqcup B$ joins a vertex in $A$ and a vertex in $B$.
In particular, the missing-edge graph of $K_{P(n)}$ induced on $A\sqcup B$ is bipartite.

Now we consider the simplicial multiwedge of $K_{P(n)}$.
Let $J$ be the $m$-tuple of positive integers in~\eqref{eq1-1}
such that $j_{p,n,s}=1$ for $p>2n$ if $s=1$ and 
$|I|\ge s+1$ for every minimal non-face $I$ of $K_{P(n)}(J)$.
This is exactly the freedom allowed in \cite[Definition~3.5]{Limonchenko2019}.  
Since $K_{P(n)}$ is flag, the condition $|I|\geq s+1$ is equivalent to the collection of inequalities
\[
   j_{p,n,s}+j_{q,n,s}\ge s+1
   \qquad\text{for every missing-edge\;}\{p, q\}\in\MF(K_{P(n)}),
\]
where $j_{i,n,s}=s$ for $1\le i\le n$ and $j_{i,n,s}=1$ for $n+1\le i\le2n$.
Set
\begin{equation}\label{eq:mns}
   d(J)
   =n(s+1)+\sum_{p=2n+1}^{m}j_{p,n,s},
\end{equation}
then we have $|\VVert(K_{P(n)}(J))|=d(J)$
and $\dim K_{P(n)}(J)=d(J)+n-m-1$.

The distinguished minimal non-faces of $K_{P(n)}(J)$ contained in the full subcomplex on the first $2n$ blocks have the form
\[
   \{i1, \ldots, is, n+i\},
   \qquad 1\le i\le n,
\]
and hence have cardinality exactly $s+1$.  Together with the condition $|I|\geq s+1$, 
we have
\begin{equation}\label{eq:nu-Krs}
   \nu(K_{P(n)}(J))=s+1.
\end{equation}
Note that the full subcomplex of $K_{P(n)}(J)$ on the first $2n$ blocks is precisely Limonchenko's $K(n,s)$,
where
\begin{equation}\label{eq:distinguished-Krs}
   K(n,s)=K(n)(\underbrace{s,\ldots,s}_{n},
                   \underbrace{1,\ldots,1}_{n}).
\end{equation}

\subsection{The Kato--Matsumoto bound}

For calculating the Kato--Matsumoto bound, we need one additional condition.
Let $H_n$ denote the missing-edge graph of $K_{P(n)}$. Recall from
Subsection~\ref{sec:flagKM} that, for $J=(j_1,\ldots,j_m)\in\bbZ_{>0}^m$,
\[
\tau_J(H_n)
=
\min_{\{a,b,c\}\text{: a triangle of }H_n}
(j_a+j_b+j_c),
\]
with the convention $\tau_J(H_n)=+\infty$ if $H_n$ is triangle-free.

We say that a choice of the indices $j_{2n+1,n,s},\ldots, j_{m,n,s}$ in~\eqref{eq1-1} is
\emph{KM-admissible} if
\begin{equation}\label{eq:KM-admissible}
\tau_J(H_n)\ge 2(s+1).
\end{equation}
This condition is not part of Limonchenko's definition.  For every $s\ge2$, it can be achieved simultaneously with 
the condition $|I|\geq s+1$: the induced missing-edge graph on the first $2n$ vertices is bipartite, so every triangle of $H_n$ contains a vertex with index $p>2n$ and the corresponding integers $j_{p,n,s}$ in $J$ may be chosen sufficiently large. We do not impose \eqref{eq:KM-admissible} unless Kato--Matsumoto sharpness is explicitly asserted.

\begin{proposition}\label{prop:Limonchenko-kappa}
It follows that
\[
   \nu(K_{P(n)}(J))=s+1
   \quad\text{and}\quad
   \kappa(K_{P(n)}(J))
   =\min\bigl\{2(s+1),\tau_{J}(H_n)\bigr\}.
\]
Consequently, $\mathcal Z_{K_{P(n)}(J)}$ is exactly $2s$-connected.  The Kato--Matsumoto bound is sharp if and only if the chosen $J$ is KM-admissible in the sense of \eqref{eq:KM-admissible}; in that case $\kappa(K_{P(n)}(J))=2(s+1)$.
\end{proposition}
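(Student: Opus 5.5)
The plan is to read everything off the flag-complex machinery of Subsection~\ref{sec:flagKM}, applied to $K=K_{P(n)}$, which is flag with missing-edge graph $H_n$. The first equality $\nu(K_{P(n)}(J))=s+1$ is already \eqref{eq:nu-Krs}. It comes from two facts. First, the distinguished minimal non-faces $\{i1,\ldots,is,n+i\}$ have cardinality $s+1$; these arise from the edges $\{i,n+i\}$ of $H_n$, which belong to $\MF(K(n))$ (the case $i=0$, $k=i$). Second, the standing hypothesis $|I|\ge s+1$ holds for all minimal non-faces. In the notation of Subsection~\ref{sec:flagKM}, this says $\varepsilon_J(H_n)=\nu_J(K_{P(n)})=s+1$.

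For $\kappa$, I would invoke Theorem~\ref{thm:weighted-triangle}(1) directly. It gives $\kappa(K_{P(n)}(J))=\min\{2\varepsilon_J(H_n),\tau_J(H_n)\}=\min\{2(s+1),\tau_J(H_n)\}$, with the convention $\tau_J=+\infty$ when $H_n$ is triangle-free. The only point to check is that Theorem~\ref{thm:weighted-triangle} applies. It requires $K$ flag and not a simplex: $K_{P(n)}$ is flag because it is the nerve of the flag polytope $P(n)$, and it is not a simplex because it has missing edges.

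The exact connectivity comes next. Proposition~\ref{prop:connectivity} applies to $K_{P(n)}(J)$, which is not a simplex since it has a minimal non-face. It shows that $\mathcal Z_{K_{P(n)}(J)}$ is exactly $(2\nu-2)=2s$-connected.

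For sharpness, I use Theorem~\ref{thm:weighted-triangle}(2): $\delta_{\mathrm{KM}}(K_{P(n)}(J))=\max\{0,2(s+1)-\tau_J(H_n)\}$. This vanishes if and only if $\tau_J(H_n)\ge 2(s+1)$, which is exactly \eqref{eq:KM-admissible}. By the discussion after Definition~\ref{def:KMdefect}, vanishing of the defect is equivalent to sharpness of the Kato--Matsumoto bound. In that case the minimum equals $2(s+1)$.

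I expect no real obstacle. The only subtle point is bookkeeping: the weights $j_i$ here are the entries of $J$ from \eqref{eq1-1}, and $\varepsilon_J(H_n)$ must equal $s+1$ rather than merely be bounded below by it. Equality holds because the edge $\{1,n+1\}$ has weight $s+1$ and every other edge has weight at least $s+1$ by hypothesis.
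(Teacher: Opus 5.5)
Your proposal is correct and follows essentially the same route as the paper: you get $\nu(K_{P(n)}(J))=\varepsilon_J(H_n)=s+1$ from the distinguished minimal non-faces together with the hypothesis $|I|\ge s+1$. You then apply Theorem~\ref{thm:weighted-triangle}(1) to the flag complex $K_{P(n)}$, use Proposition~\ref{prop:connectivity} for exact connectivity, and characterize sharpness through the Kato--Matsumoto defect. Your use of Theorem~\ref{thm:weighted-triangle}(2) for sharpness is only a repackaging of the paper's appeal to Definition~\ref{def:KMdefect} combined with part~(1).
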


\begin{proof}
By the definition of $K_{P(n)}(J)$,
every minimal non-face of $K_{P(n)}(J)$ has cardinality at least $s+1$, while the distinguished minimal non-faces in \eqref{eq:distinguished-Krs} have cardinality exactly $s+1$. Hence $\nu(K_{P(n)}(J))=s+1$,
which proves the first assertion.
Consequently, $\varepsilon_J(H_n)=s+1$.
Hence, the second assertion follows from Theorem~\ref{thm:weighted-triangle}(1),
applied to the flag complex $K_{P(n)}$ with the $m$-tuple $J$.
The exact connectivity follows from Proposition~\ref{prop:connectivity}. 
The final assertion is immediate from Definition~\ref{def:KMdefect} and \eqref{eq:KM-admissible}.
\end{proof}

\subsection{Proof of Theorem~\ref{thm1}}

The assertion concerning Massey products in Theorem~\ref{thm1} follows directly from Limonchenko's construction in~\cite{Limonchenko2019}.
The relevant background and basic notions concerning formality and Massey products of moment-angle complexes were briefly reviewed in Subsection~\ref{sec2-3}. 
We refer to~\cite{DS07, Lim17, Limonchenko2019, GL21, Suciu2026}
and the references therein for further details.

By~\cite[Theorem~4.1]{Limonchenko2019}, applied with
$\mathbf{k}=\mathbb{Q}$ and $r=n$, the rational cohomology algebra
$H^*(\mathcal Z_{K_{P(n)}(J)};\mathbb Q)$ contains a strictly defined nontrivial $n$-fold Massey product.
The input classes are the Hochster classes associated to the distinguished
minimal non-faces of cardinality $s+1$, and hence have degree $2(s+1)-1=2s+1$.
Therefore, the $n$-fold product lies in degree
$n(2s+1)-(n-2)=2ns+2$.
In particular, the moment-angle complex $\mathcal Z_{K_{P(n)}(J)}$ is 
non-formal over $\bbQ$.


Now we prove Theorem~\ref{thm1}.

\begin{proof}[Proof of Theorem~\ref{thm1}]
Fix integers $n\ge 3$ and $s\ge1$.
Let $K_{P(n)}(J)$ be the simplicial multiwedge of $K_{P(n)}$ with $J$ introduced in Section~\ref{sec:limonchenko},
where $J$ is the $m$-tuple in~\eqref{eq1-1}.
Since $P(n)$ is a simple polytope, its nerve complex $K_{P(n)}$ is pure,
and therefore its simplicial multiwedge $K_{P(n)}(J)$ is also pure by Corollary~\ref{cor:purity}.
Then the weighted homogeneous polynomial $\Phi_{K_{P(n)}(J)}(x,y)$ obtained from $K_{P(n)}(J)$
in Theorem~\ref{thm:Suciu-realization},
which has trivial geometric monodromy and whose Milnor fiber is homotopy equivalent to $\mathcal Z_{K_{P(n)}(J)}$,
is actually homogeneous.
We set $f_{n,J}=\Phi_{K_{P(n)}(J)}$.

The complex $K_{P(n)}$ has $m$ vertices, where $m$ is given in \eqref{eq:mr}, and its facets have cardinality $n$. 
Therefore, Proposition~\ref{thm:degree-invariance} gives
\[
   \deg f_{n,J}
   =m-n+1
   =\left(\frac{n(n+3)}2-1\right)-n+1
   =\frac{n(n+1)}2.
\]

By \eqref{eq:nu-Krs}, $\nu(K_{P(n)}(J))=s+1$.  Thus, it follows from Proposition~\ref{prop:connectivity} 
that the Milnor fiber $f_{n,J}^{-1}(1)$ of $f_{n,J}$ is $2s$-connected and this connectivity is exact. This proves~(1).

By \cite[Theorem~4.1]{Limonchenko2019}, applied with
$\mathbf{k}=\mathbb Q$ and $r=n$, the rational cohomology
$H^*(\mathcal Z_{K_{P(n)}(J)};\mathbb Q)$ contains a strictly defined
nontrivial $n$-fold Massey product. 
As mentioned at the beginning of this subsection,
the input classes have degree $2s+1$ and the product lies in degree $2ns+2$.
Transporting the product across the 
homotopy equivalence between $f_{n,J}^{-1}(1)$ and $\mathcal Z_{K_{P(n)}(J)}$ proves~(2), and non-formality follows from the existence of a nontrivial Massey product.

Finally, Proposition~\ref{prop:Limonchenko-kappa} gives
\[
   \operatorname{codim}\operatorname{Sing}(f_{n,J})
   =\min\bigl\{2(s+1),\tau_{J}(H_n)\bigr\}.
\]
For a KM-admissible choice, this equals $2s+2$, so the Kato--Matsumoto lower bound is $2s$ and is sharp.  This proves the last assertion.
\end{proof}

\begin{remark}\label{rem:ambient}
From \eqref{eq:mns} and \eqref{eq:facet-number},
the ambient dimension $N$ of the homogeneous polynomial $f_{n,J}:\bbC^N\to\bbC$ in Theorem~\ref{thm1} is
\[
   N=d(J)+|\Fac(K_{P(n)}(J))|,
\]
where
\[
   |\Fac(K_{P(n)}(J))|
   =\sum_{F\in\Fac(K_{P(n)})}\prod_{i\notin F}j_{i,n,s}.
\]
Therefore, the ambient dimension $N$ grows with $s$, whereas the ordinary polynomial degree $\deg f_{n,J}$ remains the fixed value $n(n+1)/2$.
\end{remark}

\subsection{Applications to Suciu's problems}\label{sec:applications}


As mentioned in the introduction, 
Theorem~\ref{thm1} provides homogeneous polynomials that resolve~\cite[Problem~6.1]{Suciu2026},
with the additional property that their Kato--Matsumoto estimates are sharp.
Note that the homogeneous realization not only exists, but can be chosen with a degree depending only on the order of the Massey product and not on the connectivity $2s$.

\begin{example}
Set $n=3$ and $J=(s,s,s,1,1,1,j_{7,3,s},j_{8,3,s})$ in Theorem~\ref{thm1}.
Then, the degree of $f_{3,J}$ is $n(n+1)/2=6$.
Thus, for every $s\geq 2$, there exists a homogeneous sextic polynomial $f_{3,J}$ with trivial geometric monodromy whose Milnor fiber is exactly $2s$-connected and has
a strictly defined nontrivial Massey product in degree $6s+2$.
Any choice of the indices $(j_{7,3,s},j_{8,3,s})$ attains the Kato--Matsumoto bound,
as shown in Example~\ref{example51}.
\end{example}


The same realization applies to the generalized moment-angle complex $\mathcal Z_K\bigl((D^{2j_i},S^{2j_i-1})_{i=1}^m\bigr)$
as stated in Theorem~\ref{thm:exponentiation}.
The following corollary, with $J=(j_1,\ldots,j_m)=(k,\ldots,k)$ for $k\geq 2$, 
provides an answer to~\cite[Problem~6.6]{Suciu2026} while preserving all the properties in Theorem~\ref{thm1}.

\begin{corollary}\label{cor:uniform-realization}
For every simplicial complex $K$ with $m$ vertices and every $m$-tuple $J=(j_1,\ldots,j_m)$ of positive integers, there exists a weighted homogeneous polynomial $\Phi_{K,J}$ with trivial geometric monodromy such that the Milnor fiber
$\Phi_{K,J}^{-1}(1)$ is homotopy equivalent to $\mathcal Z_K\bigl((D^{2j_i},S^{2j_i-1})_{i=1}^m\bigr)$. 
Moreover, for $K=K_{P(n)}$, $\Phi_{K,J}$ is homogeneous and satisfies all the properties asserted in Theorem~\ref{thm1} for each pair of $n\geq 3$ and $s\geq 2$.
\end{corollary}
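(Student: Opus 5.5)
The corollary has two parts: a general realization statement, and a specialization to $K=K_{P(n)}$. Both should follow from results already established, so the plan is mostly bookkeeping.

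For the general statement, I take $\Phi_{K,J}:=\Phi_{K(J)}$, the polynomial~\eqref{eq:Suciu-poly} attached to the simplicial multiwedge $K(J)$. Corollary~\ref{thm:general-realization} then gives everything needed. It says $\Phi_{K(J)}$ is weighted homogeneous and has trivial geometric monodromy. It also says $\Phi_{K(J)}^{-1}(1)\simeq\mathcal Z_{K(J)}$, and by Theorem~\ref{thm:exponentiation} this is homeomorphic to $\mathcal Z_K\bigl((D^{2j_i},S^{2j_i-1})_{i=1}^m\bigr)$. One point to check is that $K(J)$ has no ghost vertices, so that Theorem~\ref{thm:Suciu-realization} applies as stated. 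Each vertex $ik$ lies in some facet $\widehat F_{j,\mathbf b_j}$: if $i\in F_j$ for some facet this is clear, and otherwise we choose $b_i\neq ik$, which is possible when $j_i\ge2$. The remaining case, $j_i=1$, is covered directly because $K$ itself has no ghost vertices.

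For $K=K_{P(n)}$, note that $K_{P(n)}$ is pure, being the boundary of a simplicial polytope. Corollary~\ref{cor:purity} then shows $K_{P(n)}(J)$ is pure, and Theorem~\ref{thm:Suciu-realization} upgrades weighted homogeneity to homogeneity. Proposition~\ref{thm:degree-invariance} gives the degree as $m-n+1=n(n+1)/2$.

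The main issue is matching the $J$ in this corollary with the $J$ in~\eqref{eq1-1}, since Theorem~\ref{thm1} is stated only for tuples of the shape~\eqref{eq1-1}.
\begin{itemize}
\item If $J$ has the form~\eqref{eq1-1} with $s\ge2$, then $\Phi_{K,J}$ is literally $f_{n,J}$ from the proof of Theorem~\ref{thm1}. Exact $2s$-connectivity, the strictly defined nontrivial $n$-fold Massey product in degree $2ns+2$, and non-formality all transfer through the homotopy equivalence.
\item For Kato--Matsumoto sharpness, I choose the tail entries $j_{p,n,s}$ with $p>2n$ large enough that $J$ is KM-admissible; this is possible because every triangle of $H_n$ meets an index $p>2n$. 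Proposition~\ref{prop:Limonchenko-kappa} then gives $\kappa=2s+2$, so the defect is zero.
\end{itemize}

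For Suciu's uniform case $J=(k,\ldots,k)$ with $k\ge2$, I would not claim that the constant tuple itself has all the properties. By Corollary~\ref{cor:flag-universal}, if $H_n$ contains a triangle then the defect equals $k$, not zero. So the statement is to be read as: for each pair $(n,s)$, $s\ge2$, there is an admissible $J$ of the form~\eqref{eq1-1} for which $\Phi_{K,J}$ realizes the generalized moment-angle complex and has all the properties of Theorem~\ref{thm1}.
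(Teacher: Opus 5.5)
Your proposal is correct and follows the same route as the paper: $\Phi_{K,J}=\Phi_{K(J)}$ via Corollary~\ref{thm:general-realization} and Theorem~\ref{thm:exponentiation}, homogeneity for $K=K_{P(n)}$ from its purity together with Corollary~\ref{cor:purity} and Theorem~\ref{thm:Suciu-realization}, and the remaining properties by identifying $\Phi_{K,J}$ with $f_{n,J}$ for $J$ of the form~\eqref{eq1-1}. Your extra checks are consistent with the paper and only make explicit what its short proof leaves implicit; these are the absence of ghost vertices in $K(J)$, the degree $n(n+1)/2$, a KM-admissible choice of the tail entries, and the caution that a constant tuple $(k,\ldots,k)$ has defect $k$ whenever $H_n$ contains a triangle, and your reading of the quantifier on $J$ is the one the paper's proof actually uses.
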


\begin{proof}
By~Corollary~\ref{thm:general-realization}, $\Phi_{K,J}=\Phi_{K(J)}$ is the polynomial appearing in the first assertion.
If $K$ is pure, then
$\mathcal Z_K\bigl((D^{2j_i},S^{2j_i-1})_{i=1}^m\bigr)$ is homotopy equivalent to the Milnor fiber $\Phi_{K(J)}^{-1}(1)$ of a homogeneous polynomial $\Phi_{K(J)}$. 

Now set $K=K_{P(n)}$. Since $K_{P(n)}$ is pure, $K_{P(n)}(J)$ is also pure by Corollary~\ref{cor:purity}.
Therefore, Theorem~\ref{thm:Suciu-realization}, applied with $K=K_{P(n)}$,
implies that $\Phi_{K(J)}$ is homogeneous.
The construction of $\Phi_{K(J)}$ is exactly the same as in the proof of Theorem~\ref{thm1},
so that the last assertion follows.
\end{proof}

Note that, in the above proof, if $K$ is pure, then $\deg \Phi_{K(J)}=|\VVert(K)|-|\Fac(K)|+1$ by 
Proposition~\ref{thm:degree-invariance}.
In particular,  $\deg\Phi_{K(J)}$ is independent of $J$.

\end{document}